 \title[Breakdown of regularity of scattering] %Use the shortened version of the full title
 {Breakdown of regularity of scattering for mass-subcritical NLS}
\author[Gyu Eun Lee]{}
\subjclass{Primary: 35Q55}
\keywords{Nonlinear Schr\"odinger equation, scattering, mass-subcritical, illposedness, H\"older continuity}
\begin{document}

\maketitle

% Enter the first author's name and address:
\centerline{\scshape Gyu Eun Lee}
\medskip
{\footnotesize
% please put the address of the first author
 \centerline{Department of Mathematics, University of California, Los Angeles}
   \centerline{ Los Angeles, CA 90095, USA}
} % Do not forget to end the {\footnotesize by the sign }

\bigskip

\begin{abstract}
    We study the scattering problem for the nonlinear Schr\"odinger equation $i\pt_t u + \Delta u = |u|^p u$ on $\mathbb{R}^d$, $d\geq 1$, with a mass-subcritical nonlinearity above the Strauss exponent.
    For this equation, it is known that asymptotic completeness in $L^2$ with initial data in $\Sigma$ holds and the wave operator is well-defined on $\Sigma$.
    We show that there exists $0<\beta<p$ such that the wave operator and the data-to-scattering-state map do not admit extensions to maps $L^2\to L^2$ of class $C^{1+\beta}$ near the origin.
    This constitutes a mild form of ill-posedness for the scattering problem in the $L^2$ topology.
\end{abstract}

%------------------------------------------------------------------------------

\section{Introduction}\label{sec:intro}

Consider the \ita{defocusing mass-subcritical nonlinear Schr\"odinger equation} (NLS):
\begin{equation}\label{eqn:NLS_Cauchy}
        i\pt_t u + \Delta u = F(u) = |u|^p u, ~(t,x)\in I\times\bb{R}^d \subset \bb{R}\times\bb{R}^d,
\end{equation}
where $0 < p < \frac{4}{d}$.
It is well-known that the Cauchy problem for this equation is globally well-posed in $L^2$.
In this paper we are concerned with two elements of the long-time asymptotic behavior of solutions to this equation in the $L^2$ topology.
The first is the question of asymptotic completeness.
We say that \eqref{eqn:NLS_Cauchy} is \ita{asymptotically complete} in $L^2$ if for each $\phi\in L^2$, there exists $u_+\in L^2$ so that the global solution $u$ to \eqref{eqn:NLS_Cauchy} with $u(t=0) = \phi$ satisfies
\[
    \lim_{t\to\infty} \|e^{-it\Delta}u(t) - u_+ \|_{L^2} = 0.
\]
The second is the existence of the wave operator.
We say that the \ita{wave operator} for \eqref{eqn:NLS_Cauchy} is well-defined on $L^2$ if for each $\phi\in L^2$, there exists a unique global solution $u\in C_{t,\tnm{loc}}L_x^2$ to \eqref{eqn:NLS_Cauchy} satisfying
\[
    \lim_{t\to\infty} \|e^{-it\Delta}u(t) - \phi\|_{L^2} = 0.    
\]
Analogous definitions can be made as $t\to-\infty$; as the distinction between forward and backward time does not affect any part of this paper, we consider the forward time direction only.

Whether \eqref{eqn:NLS_Cauchy} is asymptotically complete or admits a wave operator in $L^2$ are currently open problems.
The known results rely on stronger assumptions on the space of initial data or scattering states.
We introduce the two representative results here.
Let $\Sigma$ be the Banach space defined by the norm
\[
    \|f\|_\Sigma^2 = \|f\|_{L^2}^2 + \|\nabla f\|_{L^2}^2 + \|xf\|_{L^2}^2.    
\]
The first result is by Ginibre and Velo, and establishes the scattering theory in $\Sigma$.
\begin{thm}[Scattering in $\Sigma$; \cites{GiVe79.Cauchy, GiVe19.Scattering}]\label{thm:Sigma_scattering}
    \hspace{1em}
    \begin{enumerate}
        \item Let $\alpha(d) < p < \frac{4}{d}$, where
        \[
            \alpha(d) = \frac{2-d + \sqrt{(d-2)^2+16d}}{2d}
        \]
        denotes the \ita{Strauss exponent}
        \footnote{The Strauss exponent is a current technical limitation for the scattering theory for \eqref{eqn:NLS_Cauchy}.
        It represents the threshold at which one can obtain global spacetime bounds for the solution in critically scaling Strichartz spaces.
        We note that the range of nonlinearities for Theorem \ref{thm:Sigma_scattering} be broadened to $\frac{4}{d+2}<p<\frac{4}{d}$ with a small-data assumption, due to Cazenave and Weissler \cites{CaWe92}.}
        Then the Cauchy problem for \eqref{eqn:NLS_Cauchy} is globally well-posed in $\Sigma$.
        \item \eqref{eqn:NLS_Cauchy} is asymptotically complete in $\Sigma$: for each $\phi\in\Sigma$, there exists $u_+\in \Sigma$ so that the global solution $u$ to \eqref{eqn:NLS_Cauchy} with $u(t=0) = \phi$ satisfies
        \[
            \lim_{t\to\infty} \|e^{-it\Delta}u(t) - u_+ \|_{\Sigma} = 0.
        \]
        \item The wave operator is well-defined on $\Sigma$; for each $\phi\in \Sigma$, there exists a unique global solution $u\in C_{t,\tnm{loc}}\Sigma(\mathbb{R})$ to \eqref{eqn:NLS_Cauchy} satisfying
        \[
            \lim_{t\to\infty} \|e^{-it\Delta}u(t) - \phi\|_{\Sigma} = 0.    
        \]
    \end{enumerate}
\end{thm}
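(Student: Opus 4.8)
The plan is to prove parts (1)--(3) by the vector-field (pseudoconformal) method. Global existence in $L^2$ alone is classical: since $p<\frac4d$ is mass-subcritical, the local existence time furnished by Strichartz-based iteration depends only on $\|u_0\|_{L^2}$, which is conserved, so local solutions glue to a global one; propagating $\nabla u$ and $xu$ in $L^2$ and extracting scattering is the substantive part. The central object is the Galilean operator
\[
    J(t) = x + 2it\nabla = e^{it\Delta}\,x\,e^{-it\Delta} = 2it\,e^{i|x|^2/4t}\,\nabla\!\bigl(e^{-i|x|^2/4t}\,\cdot\,\bigr),
\]
which commutes with $i\partial_t+\Delta$, satisfies the gauge-covariance bound $|J(t)F(u)|\lesssim |u|^p\,|J(t)u|$ pointwise (using $F(e^{i\theta}z)=e^{i\theta}F(z)$ and the chain rule in the last representation of $J$), and obeys the Gagliardo--Nirenberg-type decay estimate
\[
    \|u(t)\|_{L^r_x} \lesssim |t|^{-d\left(\frac12-\frac1r\right)}\,\|u(t)\|_{L^2_x}^{1-\theta}\,\|J(t)u(t)\|_{L^2_x}^{\theta}, \qquad \theta=d\!\left(\tfrac12-\tfrac1r\right),
\]
for Sobolev-admissible $r$. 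In parallel I would use the defocusing pseudoconformal identity $\frac{d}{dt}\bigl(\|J(t)u\|_{L^2}^2 + \tfrac{8t^2}{p+2}\|u(t)\|_{L^{p+2}}^{p+2}\bigr) = \tfrac{4t(4-dp)}{p+2}\|u(t)\|_{L^{p+2}}^{p+2}$, whose right-hand side has a definite sign for $p<\frac4d$; together with the decay estimate and Gr\"onwall this pins down the a priori rate of $\|u(t)\|_{L^{p+2}}$.

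The next step is a global continuity/bootstrap argument. Fix a Strichartz pair $(q,r)$ adapted to the nonlinearity and let $X(T)$ collect $\sup_{0\le t\le T}\|u(t)\|_\Sigma$ together with the Strichartz norms of $u$, $\nabla u$ and $J(t)u$ on $[0,T]$. Applying Strichartz to the Duhamel formula, using the commutation $J(t)e^{i(t-\tau)\Delta}=e^{i(t-\tau)\Delta}J(\tau)$, the pointwise bound on $J(t)F(u)$, H\"older in $x$, and the decay estimate to trade spatial Lebesgue norms for integrable powers of $|\tau|$, one bounds the nonlinear contributions by $\|u\|_{L^q_tL^r_x}^p$ times the $J$- and $\nabla$-Strichartz norms. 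The decisive point is that the emergent time weights lie in the dual-Strichartz space $L^{q'}_t$ precisely when $p>\alpha(d)$: this is the analytic meaning of the Strauss exponent and it is exactly the obstruction identified in the theorem. With $p>\alpha(d)$ the estimates close: the global scattering norm $\|u\|_{L^q_tL^r_x(\mathbb{R}_+\times\mathbb{R}^d)}$ is finite, and feeding this back linearly bounds $\|J(t)u(t)\|_{L^2}$ and $\|\nabla u(t)\|_{L^2}$ uniformly, giving $\sup_t\|u(t)\|_\Sigma<\infty$. Since no smallness of the data is assumed, the uniform $L^2$ control from mass conservation and the favorable sign in the pseudoconformal identity are what let the argument run for large data; in low dimensions one may alternatively transfer the problem, via the pseudoconformal change of variables, to a non-autonomous NLS with an $L^1_{\mathrm{loc},t}$ time weight on a finite interval and invoke the $H^1$-subcritical local theory there.

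Given the global bounds, scattering in $\Sigma$ (part (2)) follows from a Cauchy criterion: for $s<t$,
\[
    e^{-it\Delta}u(t) - e^{-is\Delta}u(s) = -i\int_s^t e^{-i\tau\Delta}F(u(\tau))\,d\tau,
\]
whose $\Sigma$-norm is controlled by moving $x$ and $\nabla$ inside via $x\,e^{-i\tau\Delta}=e^{-i\tau\Delta}J(\tau)$ and $\nabla e^{-i\tau\Delta}=e^{-i\tau\Delta}\nabla$ and then using the pointwise bound on $J(\tau)F(u)$, H\"older, and the decay estimate; the same condition $p>\alpha(d)$ forces the tail integral to vanish as $s\to\infty$, so $e^{-it\Delta}u(t)$ converges in $\Sigma$ to some $u_+$. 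For the wave operator (part (3)) I would reverse the construction: given $\phi\in\Sigma$, solve the asymptotic Duhamel equation $u(t)=e^{it\Delta}\phi+i\int_t^\infty e^{i(t-\tau)\Delta}F(u(\tau))\,d\tau$ by a contraction mapping on $[T,\infty)$ for $T$ large, in a metric space built from the $u$-, $J(t)u$- and $\nabla u$-Strichartz norms, where the smallness needed comes from the smallness of the time tail rather than from small data. This yields a solution on $[T,\infty)$ with $e^{-it\Delta}u(t)\to\phi$ in $\Sigma$, which part (1) extends to a global solution on $\mathbb{R}$; uniqueness follows from the same difference estimates.

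The main obstacle is the nonlinear estimate near the Strauss threshold. Because $p$ is small, the decay $|t|^{-dp/(2(p+2))}$ coming from Gagliardo--Nirenberg is weak and by itself not $L^1_t$, so one cannot argue by crude integration of $\|F(u)\|_{L^2}$; the proof must exploit the full Strichartz machinery with carefully tuned, near-endpoint admissible pairs, and the combinatorial constraints (Strichartz admissibility, H\"older balance in $x$, and $L^{q'}_t$-integrability of the resulting time weight) turn out to be simultaneously solvable exactly on the range $\alpha(d)<p<\frac4d$. A secondary technical point is that $F(u)=|u|^p u$ is only finitely-many-times differentiable and, for $p<1$, merely H\"older continuous in $u$: the difference estimates underlying the contraction and uniqueness must avoid differentiating $F$, relying instead on $|F(u)-F(v)|\lesssim(|u|^p+|v|^p)|u-v|$, whereas the a priori estimates only ever use the pointwise gauge-covariance bounds on $J(t)F(u)$ and $\nabla F(u)$, which are insensitive to the lack of smoothness of $F$.
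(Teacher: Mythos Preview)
The paper does not prove this theorem: it is quoted as background from Ginibre--Velo (with the small-data extension attributed to Cazenave--Weissler), and no proof is given in the body of the paper. What the paper \emph{does} reproduce is the pseudoconformal energy estimate (Lemma~\ref{lem:effective_pseudoconformal}), deriving the identity
\[
    \frac{d}{dt}\Bigl(\|J(t)u\|_{L^2}^2 + \tfrac{8t^2}{p+2}\|u\|_{L^{p+2}}^{p+2}\Bigr) = \tfrac{4t(4-dp)}{p+2}\|u\|_{L^{p+2}}^{p+2}
\]
and extracting the decay $\|u(t)\|_{L^{p+2}}^{p+2}\lesssim t^{-dp/2}$ via Gr\"onwall, then feeding this into Strichartz to get $\|u\|_{L^{pq/(q-2)}_tL^{p+2}_x}<\infty$ exactly when $p>\alpha(d)$ (Lemma~\ref{lem:global_nonlinear_estimate}). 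Your sketch is the classical vector-field argument and matches these ingredients precisely; the contraction for the wave operator on $[T,\infty)$ that you outline is exactly the construction from \cites{CaWe92} the paper invokes in the proof of Proposition~\ref{prop:small_data_expansion}.

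One small caution on phrasing: the ``definite sign'' of the right-hand side of the pseudoconformal identity is \emph{positive} for $p<4/d$, so $e(t)$ is increasing and the sign alone gives nothing; the decay comes only after writing $\dot e(t)=\frac{2-dp/2}{t}U(t)$ with $U(t)\le e(t)$ and applying Gr\"onwall, as the paper does explicitly. Your sketch gestures at this (``together with the decay estimate and Gr\"onwall''), but as written a reader might think the monotonicity itself is doing the work. Otherwise the proposal is a correct outline of the standard proof the paper cites.
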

The second result is due to Tsutsumi and Yajima:
\begin{thm}[Asymptotic completeness in $L^2$ for $\Sigma$ data; \cites{TsYa84}]\label{thm:Sigma_L2_completeness}
    Let $\frac{2}{d} < p < \frac{4}{d}$.
    Then for each $\phi\in\Sigma$, there exists $u_+\in L^2$ so that the global solution $u$ to \eqref{eqn:NLS_Cauchy} with $u(t=0) = \phi$ satisfies
    \[
        \lim_{t\to\infty} \|e^{-it\Delta}u(t) - u_+ \|_{L^2} = 0.
    \]
\end{thm}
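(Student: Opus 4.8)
It suffices to prove that $\int_0^\infty e^{-is\Delta}F(u(s))\,ds$ converges in $L^2$: since $u$ solves \eqref{eqn:NLS_Cauchy} with $u(0)=\phi$, Duhamel's formula gives $e^{-it\Delta}u(t)=\phi-i\int_0^t e^{-is\Delta}F(u(s))\,ds$, and the left-hand side then has an $L^2$ limit. For $p>\alpha(d)$ this is already contained in Theorem \ref{thm:Sigma_scattering}, since convergence in $\Sigma$ implies convergence in $L^2$; the real content of the claim is the range $\frac2d<p\le\alpha(d)$, where $\Sigma$-scattering is unavailable and the nonlinearity is ``short range'' only in the borderline sense that $\int^{\infty}\|e^{is\Delta}\phi\|_{L^\infty}^p\,ds\lesssim\int^{\infty}s^{-dp/2}\,ds<\infty$ precisely when $p>\frac2d$. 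The plan below treats the whole range $\frac2d<p<\frac4d$ uniformly.

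The main tool is the pseudoconformal transformation. From the global $\Sigma$-solution $u$ on $(0,\infty)$ one passes, after applying a unimodular chirp $e^{i|x|^2/(4t)}$, a dilation and a normalization to $u(1/t,\cdot)$, to a function $v$ on $(0,\infty)$ solving the transformed equation $i\partial_t v+\Delta v=|t|^{(dp-4)/2}|v|^pv$, with $t\to0^+$ for $v$ corresponding to $t\to\infty$ for $u$. The transformation is an $L^2$-isometry at each fixed time and maps $\Sigma$ to $\Sigma$, and under it asymptotic completeness of $u$ in $L^2$ becomes exactly the statement that $v(t)$ converges in $L^2$ as $t\to0^+$. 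The decisive point is that $p>\frac2d$ is equivalent to $\frac{dp-4}{2}>-1$, so the singular coefficient $|t|^{(dp-4)/2}$ is integrable at $t=0$; heuristically $v(t)$ is nearly constant near $t=0$, so $|t|^{(dp-4)/2}|v|^pv\approx|t|^{(dp-4)/2}|v_0|^pv_0$ and the relevant Duhamel integral converges because $\int_0|t|^{(dp-4)/2}\,dt<\infty$.

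To make this precise I would first record the a priori bounds for $v$ near $t=0$ coming from conservation laws for $u$: mass conservation gives $\|v(t)\|_{L^2}\equiv\|\phi\|_{L^2}$; the energy identity for the $v$-equation together with a Gr\"onwall argument gives $\tfrac12\|\nabla v(t)\|_{L^2}^2+\tfrac{|t|^{(dp-4)/2}}{p+2}\|v(t)\|_{L^{p+2}}^{p+2}\lesssim|t|^{(dp-4)/2}$, hence (with the conserved mass and interpolation) $\|v(t)\|_{L^q}\lesssim1$ for $2\le q\le p+2$, while $\|\nabla v(t)\|_{L^2}$ may grow like $|t|^{(dp-4)/4}$; and the pseudoconformal-type conservation law for the $v$-equation, which corresponds under the transformation to energy conservation for $u$, keeps $\|(x+2it\nabla)v(t)\|_{L^2}$ bounded as $t\to0^+$. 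With these in hand I would run a Strichartz contraction/continuity scheme for $v$ on a short interval $(0,t_0]$; showing that $v$ lies in an appropriate space-time space all the way down to $t=0$ yields convergence of $\int_0 e^{-is\Delta}s^{(dp-4)/2}F(v(s))\,ds$ in $L^2$, hence of $v(t)$, and the limit is then transported back to $u$.

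The main difficulty is the nonlinear estimate near $t=0$. Mass and energy give only the \emph{flat} bounds $\|v(t)\|_{L^q}\lesssim1$ for $2\le q\le p+2$, with no decay as $t\to0$ (the mass does not concentrate away), whereas the coefficient $|t|^{(dp-4)/2}$, although in $L^1$, fails to lie in $L^c$ for $c\ge\frac{2}{4-dp}$; consequently a straightforward dual-Strichartz bound on the source term loses a fixed power of $t$ and closes only in the strictly smaller range $p>\frac{4(p+2)}{d(2p+3)}$. Reaching the full range $p>\frac2d$ forces one to exploit the oscillation: after a Dollard-type factorization $e^{-is\Delta}=(\text{chirp})\circ(\text{inverse Fourier transform})\circ(\text{dilation})\circ(\text{chirp})$, the factor $|u|^p$ evaluated on a free-solution profile genuinely gains $s^{-dp/2}$, and one integrates by parts in time --- equivalently, one organizes the contraction around the scattering profile $v_0$, so that the leading term $|t|^{(dp-4)/2}|v_0|^pv_0$ is absorbed by the $L^1$-integrability of the coefficient and the remainder is controlled with the $\Sigma$-bounds, in particular the boundedness of $\|(x+2it\nabla)v(t)\|_{L^2}$. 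Some dimension-dependent care with the Strichartz exponents is needed when $d=1,2$ (no endpoint) and when $p$ is large compared to $1/d$ (to stay within the range of Sobolev embedding).
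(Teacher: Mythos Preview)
The paper does not prove this theorem; it is quoted as a known result of Tsutsumi--Yajima and is used only as background. There is therefore no ``paper's own proof'' to compare against. The paper does, however, comment on the nature of the original argument: it explicitly says that ``the asymptotic completeness result of Tsutsumi--Yajima also proceeds by a compactness argument, and thus we do not obtain a quantitative understanding of the initial-to-final-state map.''

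Your proposal starts from the same device as the original --- the pseudoconformal change of variables reducing $L^2$-scattering of $u$ at $t\to\infty$ to the existence of an $L^2$-limit of the transformed solution $v$ at $t\to 0^+$, with the integrability of $|t|^{(dp-4)/2}$ encoding the short-range condition $p>\tfrac{2}{d}$. Where you diverge from Tsutsumi--Yajima is in trying to close by a direct Strichartz contraction/continuity scheme on $(0,t_0]$. As you yourself acknowledge, the naive dual-Strichartz bound only closes on a strictly smaller range of $p$, and the fixes you sketch (Dollard-type factorization, integration by parts against the profile $v_0$, exploiting the boundedness of $\|(x+2it\nabla)v\|_{L^2}$) are not carried out and it is not clear they yield a closed estimate down to $p=\tfrac{2}{d}+$. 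This is precisely the obstruction that makes a \emph{quantitative} $L^2$ scattering statement hard in the mass-subcritical regime --- and is, incidentally, the theme of the present paper.

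The original proof avoids this by not seeking a contraction at all: from the a priori $\Sigma$-type bounds on $v$ near $t=0$ one extracts, by compactness (bounded in $H^1$, hence precompact in $L^2_{\mathrm{loc}}$, plus tightness from the weighted bound), a subsequential $L^2$-limit, and then uses the equation to upgrade to full convergence. That route is robust across the whole range $\tfrac{2}{d}<p<\tfrac{4}{d}$ but is non-quantitative, which is exactly the point the paper is making. So your outline is the right geometry but the wrong endgame for this particular theorem; if you want a complete proof at the level of the cited reference, replace the Strichartz contraction step by the compactness extraction.
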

These results do not address the question of taking data in $L^2$, which is arguably the most natural space for the problem given its mass-subcritical nature and the conservation of mass.
In this paper we offer something of an explanation for this state of affairs.
We now define our main objects and state our main results.
\begin{dfn}
    Let $\alpha(d) < p < \frac{4}{d}$.
    The \ita{initial-to-final-state map} is the map $\mcal{S}:\Sigma\to L^2$ defined by $\mcal{S}(\phi) = \lim_{t\to\infty} e^{-it\Delta}u(t) = u_+$, where $u\in C_{t,\tnm{loc}}L_x^2$ is the global solution to \eqref{eqn:NLS_Cauchy} and the limit is in the $L^2$ topology.
    The \ita{wave operator} is the map $\mcal{W}:\Sigma\to L^2$ defined by $\mcal{W}(\phi) = u(0)$, where $u\in C_{t,\tnm{loc}}L_x^2$ is the unique global solution to \eqref{eqn:NLS_Cauchy} satisfying $\lim_{t\to\infty} \|e^{-it\Delta}u(t) - \phi\|_{L^2} = 0$.
\end{dfn}
Note that $\mcal{S}$ and $\mcal{W}$ are well-defined by Theorem \ref{thm:Sigma_scattering}; in fact, we could take them to be $\Sigma$-valued, but this is not necessary for our purposes.
\begin{thm}[Main theorem]\label{thm:main}
    Assume $d\geq 1$ and $\alpha(d) < p < \frac{4}{d}$.
    Then:
    \begin{enumerate}
        \item $\mcal{S}$ and $\mcal{W}$, regarded as maps $\Sigma\to L^2$, are $s$-H\"older continuous at $0\in\Sigma$ for all $0<s\leq 1+p$ at $0$, and are not $s$-H\"older continuous at $0$ for any $s>1+p$.
        \item There exists $0<\beta<p$, with $\beta$ depending only on $d$ and $p$, such that for any ball $B\subset L^2$ containing the origin, $\mcal{S}:B\cap\Sigma\to L^2$ and $\mcal{W}:B\cap\Sigma\to L^2$ cannot be extended to maps $B\subset L^2\to L^2$ which are H\"older continuous of order $1+\beta$ at $0\in L^2$.
    \end{enumerate}
\end{thm}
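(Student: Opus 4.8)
The plan is to expand $\mcal{W}$ and $\mcal{S}$ to first nonlinear order, to identify the leading correction as an explicit degree-$(1+p)$ homogeneous map that is strictly nonzero, and to exploit that this correction scales differently with respect to the $\Sigma$-norm and the $L^2$-norm. Write $F(u)=|u|^pu$. If $u$ is the solution of Theorem~\ref{thm:Sigma_scattering} with scattering state $\phi$, Duhamel's formula at $t=0$ gives
\[
\mcal{W}(\phi)=\phi+i\int_0^\infty e^{-is\Delta}F(u(s))\,ds,\qquad \mcal{S}(\phi)=\phi-i\int_0^\infty e^{-is\Delta}F(u(s))\,ds,
\]
the second with $u(0)=\phi$. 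Replacing $u(s)$ by its linearisation defines the first Picard iterate $\Phi[\phi]:=\int_0^\infty e^{-is\Delta}F(e^{is\Delta}\phi)\,ds$, so that $\mcal{W}(\phi)=\phi+i\Phi[\phi]+\mcal{E}[\phi]$ and $\mcal{S}(\phi)=\phi-i\Phi[\phi]+\mcal{E}'[\phi]$. I would first record, from the global small-data Strichartz theory underlying Theorem~\ref{thm:Sigma_scattering} (this is where the Strauss condition and the embedding $\Sigma\hookrightarrow\dot H^{s_c}$, $s_c=\tfrac d2-\tfrac 2p$, are used), the bounds $\|\Phi[\phi]\|_{L^2}\lesssim\|\phi\|_\Sigma^{1+p}$ and $\|\mcal{E}[\phi]\|_{L^2},\|\mcal{E}'[\phi]\|_{L^2}\lesssim\|\phi\|_\Sigma^{1+2p}$ for $\|\phi\|_\Sigma\le 1$, the error being genuinely of order $1+2p$; the exact scaling $\Phi[\phi^{(\lambda)}]=\lambda^{(1+p)d/2-2}\,\Phi[\phi](\lambda\,\cdot\,)$, where $\phi^{(\lambda)}(x):=\lambda^{d/2}\phi(\lambda x)$, so $\|\Phi[\phi^{(\lambda)}]\|_{L^2}=\lambda^{pd/2-2}\|\Phi[\phi]\|_{L^2}$; and that $\Phi$ is positively homogeneous of degree $1+p$. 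The crucial nontriviality input is the identity, obtained by pairing $\Phi[\phi]$ against $\phi$ and using unitarity of $e^{is\Delta}$,
\[
\langle\Phi[\phi],\phi\rangle_{L^2}=\int_0^\infty \|e^{is\Delta}\phi\|_{L^{p+2}}^{p+2}\,ds,
\]
whose right-hand side is finite because the dispersive bound $\|e^{is\Delta}\phi\|_{L^{p+2}}\lesssim s^{-dp/(2(p+2))}$ makes the integrand $O(s^{-dp/2})$ and $dp/2>1$ is exactly $p>2/d$ (which follows from $p>\alpha(d)$), and strictly positive whenever $\phi\ne0$; hence $\|\Phi[\phi]\|_{L^2}\ge\|\phi\|_{L^2}^{-1}\int_0^\infty\|e^{is\Delta}\phi\|_{L^{p+2}}^{p+2}\,ds>0$ for all $\phi\ne0$.

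Part (1) then follows quickly. The estimate $\|\mcal{W}(\phi)-\phi\|_{L^2}\lesssim\|\phi\|_\Sigma^{1+p}$, together with $\mcal{W}(0)=0$ and $D\mcal{W}(0)=\mathrm{Id}$, gives $s$-H\"older continuity at $0$ for every $s\le 1+p$, and likewise for $\mcal{S}$. For sharpness, fix $\phi_0\ne0$ in $\Sigma$ and test along $t\phi_0$: since $\|\mcal{W}(t\phi_0)-t\phi_0\|_{L^2}\ge t^{1+p}\|\Phi[\phi_0]\|_{L^2}-Ct^{1+2p}\ge\tfrac12 t^{1+p}\|\Phi[\phi_0]\|_{L^2}$ for $t$ small, and since the degree-$(1+p)$ profile $i\Phi[\phi_0]$ cannot be absorbed into any Taylor polynomial of $\mcal{W}$ at $0$ --- in the range $\alpha(d)<p<4/d$ the exponent $p$ is never an even integer, so $\Phi$ is not a bounded multilinear form --- the ratio $\|\mcal{W}(t\phi_0)-t\phi_0\|_{L^2}/t^s$ diverges as $t\to0^+$ for every $s>1+p$. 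The same reasoning applies to $\mcal{S}$.

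For Part (2), suppose $G\colon B\subset L^2\to L^2$ extends $\mcal{W}|_{B\cap\Sigma}$ and is $C^{1+\beta}$ at the origin, so $\|G(\phi)-G(0)-DG(0)\phi\|_{L^2}\le C\|\phi\|_{L^2}^{1+\beta}$ on $B$. Then $G(0)=0$, and testing on $t\psi$ with $\psi\in\Sigma$ fixed, using $\|\mcal{W}(t\psi)-t\psi\|_{L^2}\lesssim t^{1+p}$ from Part (1), forces $DG(0)\psi=\psi$ for all $\psi\in\Sigma$, hence $DG(0)=\mathrm{Id}$, so that $\|\mcal{W}(\phi)-\phi\|_{L^2}\le C\|\phi\|_{L^2}^{1+\beta}$ for all $\phi\in B\cap\Sigma$. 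To contradict this I would use the two-parameter family $\psi_{\lambda,\epsilon}:=\epsilon\,\phi_0^{(\lambda)}$ with $\phi_0\ne0$ Schwartz and $0<\lambda,\epsilon<1$, which is small in $L^2$ but large in $\Sigma$: its scalings are $\|\psi_{\lambda,\epsilon}\|_{L^2}=\epsilon\|\phi_0\|_{L^2}$, $\|\psi_{\lambda,\epsilon}\|_\Sigma\sim\epsilon\lambda^{-1}$ as $\lambda\to0$, $\|\Phi[\psi_{\lambda,\epsilon}]\|_{L^2}=\epsilon^{1+p}\lambda^{pd/2-2}\|\Phi[\phi_0]\|_{L^2}$, and $\|\mcal{E}[\psi_{\lambda,\epsilon}]\|_{L^2}\lesssim(\epsilon\lambda^{-1})^{1+2p}$. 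Choosing $\epsilon=\lambda^{\theta}$ with $\theta=\theta(d,p)$ large enough makes $\|\psi_{\lambda,\epsilon}\|_\Sigma\to0$ and $\|\mcal{E}[\psi_{\lambda,\epsilon}]\|_{L^2}\le\tfrac12\|\Phi[\psi_{\lambda,\epsilon}]\|_{L^2}$, so $\|\mcal{W}(\psi_{\lambda,\epsilon})-\psi_{\lambda,\epsilon}\|_{L^2}\ge\tfrac12\|\Phi[\psi_{\lambda,\epsilon}]\|_{L^2}\gtrsim\epsilon^{1+p}\lambda^{pd/2-2}$, and therefore
\[
\frac{\|\mcal{W}(\psi_{\lambda,\epsilon})-\psi_{\lambda,\epsilon}\|_{L^2}}{\|\psi_{\lambda,\epsilon}\|_{L^2}^{1+\beta}}\gtrsim\frac{\epsilon^{1+p}\lambda^{pd/2-2}}{\epsilon^{1+\beta}}=\lambda^{\theta(p-\beta)+pd/2-2}\xrightarrow[\lambda\to0]{}\infty
\]
whenever $\theta(p-\beta)<2-pd/2$. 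Since $2-pd/2>0$ is the mass-subcriticality, the two requirements on $\theta$ are jointly satisfiable exactly when $\beta$ exceeds an explicit threshold $\beta^\ast(d,p)$ that is strictly smaller than $p$; any $\beta\in(\beta^\ast,p)$ then yields the desired contradiction, and the case of $\mcal{S}$ is identical, the leading term differing only in sign.

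The main obstacle is the bookkeeping in this last step: one must verify that "$\theta$ large enough to kill the error" and "$\theta(p-\beta)<2-pd/2$" can hold simultaneously for some $\beta<p$, and this compatibility turns out to be precisely equivalent to $p<4/d$ --- so the mass-subcritical hypothesis enters essentially rather than incidentally. A secondary difficulty is making the expansion $\mcal{W}(\phi)=\phi+i\Phi[\phi]+O(\|\phi\|_\Sigma^{1+2p})$ rigorous with an error that is both genuinely of order $1+2p$ and controlled uniformly along the rescaled family; this rests on the global Strichartz estimates available only above the Strauss exponent, which is what confines the statement to $p>\alpha(d)$.
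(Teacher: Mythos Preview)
Your overall architecture is exactly that of the paper: the first-Picard expansion $\mcal{T}_\pm(\phi)=\phi\pm i\Phi[\phi]+\mcal{E}_\pm[\phi]$, the duality lower bound $\|\Phi[\phi]\|_2\ge\|\phi\|_2^{-1}\int_0^\infty\|e^{is\Delta}\phi\|_{p+2}^{p+2}\,ds$, and the two-parameter rescaled family (your $\lambda$ is the paper's $1/\sigma$) to exploit the mass-subcritical scaling $2-\tfrac{dp}{2}>0$. Your Part~(1) sharpness test along a single ray $t\phi_0$ is in fact a little cleaner than the paper's, which reuses the rescaled family there too.

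The genuine gap is the error bound. You assert $\|\mcal{E}[\phi]\|_{L^2}\lesssim\|\phi\|_\Sigma^{1+2p}$ as a consequence of ``the global small-data Strichartz theory underlying Theorem~\ref{thm:Sigma_scattering}'' together with an embedding $\Sigma\hookrightarrow\dot H^{s_c}$. But $s_c=\tfrac d2-\tfrac2p<0$ in the mass-subcritical range, so that embedding is not available (indeed $L^2\not\hookrightarrow\dot H^{s_c}$, and the weight in $\Sigma$ does not cure the low-frequency singularity). The scattering theory in $\Sigma$ runs through the pseudoconformal energy estimate, which yields only $\|u(t)\|_{p+2}\lesssim\langle t\rangle^{-\frac{dp}{2(p+2)}}\|\phi\|_\Sigma^{\frac{2}{p+2}}$; the exponent $\tfrac{2}{p+2}<1$ is a genuine loss, and propagating it through the Strichartz/H\"older chain gives only
\[
\|\mcal{E}_\pm[\phi]\|_{L^2}\lesssim\|\phi\|_\Sigma^{\frac{2(2p+1)}{p+2}},
\]
which exceeds $1+p$ \emph{only when $p<1$}, i.e.\ when $d\ge 4$. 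For $d=1,2,3$ the na\"ive error bound is too weak to be dominated by the main term, and the paper has to sharpen it (its Appendix~A) by interpolating between Gagliardo--Nirenberg and pseudoconformal to squeeze the exponent above $1+p$. So what you flag at the end as a ``secondary difficulty'' is in fact the principal technical content of the proof; once an error exponent strictly larger than $1+p$ is secured, your Parts~(1) and~(2) computations go through verbatim, but you should not expect to reach the clean exponent $1+2p$.
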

Here, for $s>0$ possibly non-integer, by a map $F:X\to L^2$ (where $X = \Sigma$ or $L^2$) that is $s$-H\"older continuous of order $s$ at $x_0 \in X$ we mean a map which belongs to the pointwise H\"older space $C^s(x_0)$ (see Definition \ref{dfn:Holder_space}).
In particular:
\begin{cor}\label{cor:main}
    Assume $d\geq 1$ and $\alpha(d) < p < \frac{4}{d}$.
    Then:
    \begin{enumerate}
        \item Let $s>1+p$, and let $n$ be the integer part of $s$.
        Then $\mcal{S}$ and $\mcal{W}$, regarded as maps $\Sigma\to L^2$, cannot have an $n$-th Gateaux derivative defined about $0\in\Sigma$ which is H\"older continuous of order $s-n$.
        \item Let $s = 1+\beta$, where $\beta$ is as in Theorem \ref{thm:main}, and let $n$ be the integer part of $s$.
        Then $\mcal{S}$ and $\mcal{W}$ cannot be extended to maps $L^2\to L^2$ that admit an $n$-th Gateaux derivative defined about $0\in L^2$ which is H\"older continuous of order $s-n$.
    \end{enumerate}
\end{cor}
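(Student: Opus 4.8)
The plan is to build everything on the Duhamel representation of the scattering maps. For $\phi\in\Sigma$ of small $\Sigma$-norm I would write $\mcal{S}(\phi)=\phi+\mcal{N}(\phi)$ with nonlinear remainder $\mcal{N}(\phi)=-i\int_0^\infty e^{-is\Delta}F(u(s))\,ds$ ($u$ the solution with data $\phi$), and note that $\mcal{W}$ has the same form with $-i$ replaced by $+i$ and $u$ the corresponding final--state solution, so that everything below applies to $\mcal{W}$ as well with the evident sign changes. Splitting $u=e^{it\Delta}\phi+w$ and running the mass--subcritical small--data Strichartz theory in the scaling--critical spaces available here because $p>\alpha(d)$ (cf.\ the remark after Theorem~\ref{thm:Sigma_scattering} and \cites{CaWe92}), one gets $\|w\|_X\lesssim\|\phi\|_\Sigma^{1+p}$ and the \emph{first Born expansion}
\[
    \mcal{N}(\phi)=\mcal{M}(\phi)+O\bigl(\|\phi\|_\Sigma^{1+2p}\bigr)\text{ in }L^2,\qquad \mcal{M}(\phi):=-i\int_0^\infty e^{-is\Delta}F\bigl(e^{is\Delta}\phi\bigr)\,ds,
\]
the integral converging in $L^2$ with $\|\mcal{N}(\phi)\|_{L^2},\|\mcal{M}(\phi)\|_{L^2}\lesssim\|\phi\|_\Sigma^{1+p}$. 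The positive half of Theorem~\ref{thm:main}(1) is then immediate: for $0<s\le1+p$ take the Taylor polynomial $P(\phi)=\phi$ (or $P\equiv0$ if $s\le1$), so $\|\mcal{S}(\phi)-P(\phi)\|_{L^2}=\|\mcal{N}(\phi)\|_{L^2}\lesssim\|\phi\|_\Sigma^{1+p}\le\|\phi\|_\Sigma^{s}$ for $\|\phi\|_\Sigma$ small, i.e.\ $\mcal{S},\mcal{W}\in C^{s}(0)$.

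For the sharpness in (1) I would exploit two features of $\mcal{M}$: (a) $\langle\mcal{M}(\phi),\phi\rangle_{L^2}=-i\int_0^\infty\|e^{is\Delta}\phi\|_{L^{p+2}}^{p+2}\,ds$, whose modulus is finite (because $p>\alpha(d)>2/d$) and strictly positive for $\phi\neq0$, so $\mcal{M}(\phi)\neq0$; and (b) the exact homogeneity $\mcal{M}(z\phi)=|z|^{p}z\,\mcal{M}(\phi)$ for $z\in\bb{C}$, inherited from that of $F$. Assume $\mcal{S}\in C^{s}(0)$ for some $s>1+p$, witnessed by a polynomial $P$ of degree $\le\lceil s\rceil-1$. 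Plugging $\phi=\lambda\psi$ ($\psi\in\Sigma$ fixed) into $\mcal{S}(\phi)-P(\phi)=O(\|\phi\|_\Sigma^{s})$ and matching powers of $\lambda\downarrow0$ forces $P(0)=0$ and $DP(0)=\mathrm{Id}$; dividing by $\lambda^{1+p}$ and inserting the Born expansion then forces every homogeneous component of $P$ of degree $<1+p$ to vanish and, in the limit, $\mcal{M}(\psi)=0$ if $1+p\notin\bb{Z}$ (contradicting (a)) or $\mcal{M}(\psi)=P_{1+p}(\psi)$ for all $\psi$ if $1+p\in\bb{Z}$. The latter cannot happen: restricting to a complex line $\bb{C}\psi$ on which $\mcal{M}(\psi)\neq0$, (b) exhibits $\mcal{M}$ there as a nonzero multiple of $z\mapsto|z|^{p}z$, which is not a polynomial in $(\mathrm{Re}\,z,\mathrm{Im}\,z)$ since $p\notin2\bb{Z}_{\ge0}$ on the whole range $\alpha(d)<p<4/d$, whereas $P_{1+p}$ restricts to one.

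Part (2) I would get from mass--subcriticality by parabolic rescaling. The dilation $\phi\mapsto\phi_\lambda$, $\phi_\lambda(x):=\lambda^{2/p}\phi(\lambda x)$, takes solutions of \eqref{eqn:NLS_Cauchy} to solutions and, since $e^{it\Delta}$ intertwines with it, the scattering and wave maps commute with it: $\mcal{S}(\phi_\lambda)=(\mcal{S}\phi)_\lambda$, $\mcal{W}(\phi_\lambda)=(\mcal{W}\phi)_\lambda$. Here $\|\phi_\lambda\|_{L^2}=\lambda^{\gamma}\|\phi\|_{L^2}$ with $\gamma:=\tfrac2p-\tfrac d2>0$ (this positivity is exactly the subcriticality), whereas $\|\phi_\lambda\|_\Sigma\to\infty$ as $\lambda\downarrow0$ (the weighted part of the $\Sigma$-norm), so such data is invisible to the $\Sigma$-smallness behind Theorem~\ref{thm:Sigma_scattering}. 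Now fix any $\beta\in(0,\min\{1,p\})$ and suppose $\mcal{S}|_{B\cap\Sigma}$ extended to some $\widetilde{\mcal{S}}\in C^{1+\beta}(0)$ for the $L^2$ topology, with affine witnessing polynomial $\phi\mapsto c+L\phi$. Continuity and $\mcal{S}(0)=0$ give $c=0$; testing along rays $\lambda\psi$ ($\psi\in\Sigma$) and using $\|\mcal{N}(\lambda\psi)\|_{L^2}=O(\lambda^{1+p})=o(\lambda)$ forces $L=\mathrm{Id}$ on $\Sigma$, so $\|\mcal{N}(\phi)\|_{L^2}=\|\mcal{S}(\phi)-\phi\|_{L^2}\le C\|\phi\|_{L^2}^{1+\beta}$ for every $\phi\in\Sigma$ with $\|\phi\|_{L^2}$ small. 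Applying this at $\phi_\lambda$ — legitimate for small $\lambda$, since then $\phi_\lambda\in B\cap\Sigma$ although $\|\phi_\lambda\|_\Sigma$ is enormous — and using $\mcal{N}(\phi_\lambda)=(\mcal{N}\phi)_\lambda$ gives $\lambda^{\gamma}\|\mcal{N}(\phi)\|_{L^2}\le C\lambda^{\gamma(1+\beta)}\|\phi\|_{L^2}^{1+\beta}$, hence $\|\mcal{N}(\phi)\|_{L^2}\le C\lambda^{\gamma\beta}\|\phi\|_{L^2}^{1+\beta}\to0$ as $\lambda\downarrow0$; thus $\mcal{N}\equiv0$ on $\Sigma$, contradicting $\mcal{N}(\varepsilon\psi)\neq0$ (immediate from (a) and the Born expansion for small $\varepsilon$). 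The same runs for $\mcal{W}$ (in fact for every $\beta>0$). Corollary~\ref{cor:main} then follows, because by Taylor's theorem along rays an $n$-times Gateaux--differentiable map near $0$ whose $n$-th derivative is H\"older of order $s-n$ lies in $C^{s}(0)$; thus (1) and (2) of the Corollary are restatements of (1) and (2) of Theorem~\ref{thm:main}.

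The soft parts above reduce the whole theorem to the analytic input of the first paragraph, and that is where I expect the real work: setting up the scaling--critical Strichartz framework for $F(u)=|u|^{p}u$ above the Strauss exponent, proving that $\mcal{M}(\phi)$ converges in $L^2$, and pinning down the Born error genuinely at order $\|\phi\|_\Sigma^{1+2p}$ with constants tracked. That step is what uses $p>\alpha(d)$ and forces $\Sigma$ rather than $L^2$ in the positive statements, and it is the one most sensitive to the exact choice of function spaces.
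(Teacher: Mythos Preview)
Your deduction of the corollary from Theorem~\ref{thm:main} via Taylor's theorem along rays is the same as the paper's (Lemma~\ref{lem:Cs_necessary}). The real difference is in how you argue Theorem~\ref{thm:main}(2). The paper works entirely inside the small-$\Sigma$-data regime: it takes a two-parameter family $\phi_{\ve,\sigma}(x)=\ve\sigma^{-d/2}\phi(x/\sigma)$ with $\|\phi_{\ve,\sigma}\|_\Sigma\sim\ve\sigma\ll1$ so that the Born expansion and its error bound apply throughout, and then balances the main term $\sim\ve^{p+1}\sigma^{2-dp/2}$ against the error by coupling $\ve=\sigma^{-j}$; this yields failure of $C^{1+\beta}(0)$ only for $\beta>p-\tfrac{1}{j}(2-\tfrac{dp}{2})$ with $j$ chosen large enough for the error to lose. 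Your route is different and sharper: you exploit the \emph{exact} commutation $\mcal{S}(\phi_\lambda)=(\mcal{S}\phi)_\lambda$ under the NLS-invariant dilation, valid for every $\phi\in\Sigma$ by Theorem~\ref{thm:Sigma_scattering} regardless of $\|\phi\|_\Sigma$, so the Born expansion is invoked only once (to exhibit a single $\phi_0$ with $\mcal{N}(\phi_0)\neq0$), and the contradiction then falls out for every $\beta>0$. What the paper's approach buys is that it never leaves the perturbative regime, so everything is quantitative in the expansion; what yours buys is a shorter argument and a stronger conclusion.

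Two small cautions on the sketch. First, the error exponent $1+2p$ you assert for the Born remainder is stronger than what the paper actually achieves (it gets $\tfrac{2(2p+1)}{p+2}$ for $d\ge4$ via the pseudoconformal energy estimate, sharpened in Appendix~\ref{app:dim_1_proof} for $d\le3$); fortunately your arguments only need the error to be $o(\|\phi\|_\Sigma^{1+p})$, which the paper does establish. Second, in part~(1) your treatment of the case $1+p\in\bb{Z}$ assumes the coefficients $a_j(h)$ in Definition~\ref{dfn:Holder_space} restrict to polynomials on complex lines, but the definition allows arbitrary dependence on the direction $h$ with $\ve>0$ real, so the ``$|z|^pz$ is not polynomial'' step does not directly apply; this only bites at $d=1$, $p=3$.
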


Part (1) of Theorem \ref{thm:main} is, in some sense, unsurprising: since the nonlinearity in \eqref{eqn:NLS_Cauchy} is a pure power of degree $1+p$, given a sufficiently strong global well-posedness and scattering theory we should expect to be able to differentiate with respect to the initial or final state up to, and not more than, $1+p$ times.
In that sense part (1) provides the sharp regularity result with respect to the nonlinearity.
That this amount of regularity holds with initial or final states in $\Sigma$ is a confirmation that $\Sigma$ is such a space with a strong global well-posedness and scattering theory.
Therefore part (2) of Theorem \ref{thm:main} is the statement of primary interest: it states that if we instead take $L^2$ as our space of initial or final states, then the initial-to-final state operator and the wave operator, if they were to be defined on $L^2$, cannot attain the regularity with respect to the data suggested by the smoothness of the nonlinearity.
The value of $\beta$ in part (2) can be made explicit, which will become evident toward the end of the proof of Theorem \ref{thm:main}.
We interpret this as a mild form of ill-posedness result for the asymptotic completeness and wave operator problems in the $L^2$ topology.

We briefly review the history and relevant work behind this result.
We have already mentioned the two main positive results in the scattering theory for the mass-subcritical NLS: the work of Ginibre-Velo \cites{GiVe79.Cauchy,GiVe19.Scattering}, which establishes the scattering theory in $\Sigma$, and that of Tsutsumi-Yajima \cites{TsYa84}, which establishes asymptotic completeness in $L^2$ under the assumption of $\Sigma$ data.
The result of Tsutsumi-Yajima is optimal in that it treats the full range of nonlinearities $\frac{2}{d} < p < \frac{4}{d}$ (the so-called \ita{short-range} regime) for which mass-subcritical scattering in $L^2$ is possible; for $p\leq\frac{2}{d}$ (the \ita{long-range} regime), scattering in $L^2$ can only occur for the zero solution, which is a result due to Strauss, Glassey, and Barab \cites{Strauss73, Glassey73, Barab84}.
To be clear, we are not asserting that $\Sigma$ is a purely artificial space for the scattering theory.
It is in fact a very natural space: after a Lens transformation it arises as the harmonic energy space for Equation \ref{eqn:NLS_Cauchy}; see \cites{Tao09}.

The corresponding literature for mass-subcritical scattering with data in $L^2$ is sparser.
There is one positive result due to Nakanishi \cites{Na01}: in the full short-range regime, for any free evolution $e^{it\Delta}\psi$ there is a global solution $u$ to \eqref{eqn:NLS_Cauchy} which approximates it in $L^2$ (resp. in $H^1$) as $t\to\infty$.
However, it is not known if the global solution thus obtained is unique, so this falls short of defining the wave operator on $L^2$.
Moreover, this global solution is obtained by compactness methods, and we do not obtain a quantitative understanding of how it depends on the final state $\psi$.
In fact, the asymptotic completeness result of Tsutsumi-Yajima also proceeds by a compactness argument, and thus we do not obtain a quantitative understanding of the initial-to-final-state map either.
As for the mass-critical case $p = \frac{4}{d}$, global well-posedness and scattering in $L^2$ are known due to recent work of Dodson \cites{Do12, Do16.d1GWP, Do16.d2GWP}.
The scattering theory for the mass-critical NLS is essentially a direct consequence of its global well-posedness theory, as it admits a symmetry under the pseudoconformal transformation.

Perhaps it is telling that despite the question being relatively obvious to any student of the subject, there have been few if any positive results in the direction of mass-subcritical scattering in $L^2$.
In fact, the fact that we can construct initial states for given final states is already somewhat remarkable.
This is because (as observed by Nakanishi) the wave operator problem is in some sense scaling-supercritical in $L^2$, as can be seen by applying the pseudoconformal transform to convert it into an initial value problem.
Generally speaking, the problem of constructing the scattering operator is at least as difficult as that of constructing the wave operator.
This is because when the Cauchy problem is posed in a subcritical or critical space, it is often the case that the wave operator can be constructed by Picard iteration in the same way as one constructs local-in-time solutions.
To conclude asymptotic completeness, one additionally needs some sort of decay estimate on the nonlinear evolution that is consistent with the dispersive decay of the linear evolution.
Therefore the question of asymptotic completeness in $L^2$ appears to require both a well-posedness theory for a scaling-supercritical problem and rather explicit decay-in-time estimates.
It is generally conjectured that scaling-supercritical problems exhibit some form of ill-posedness.
This is the primary motivation behind this paper: to demonstrate that the asymptotic completeness and wave operator problems on $L^2$ are ill-posed in some appropriate mild sense, offering a partial explanation for the difficulty in resolving these problems.

We now outline our methods for proving Theorem \ref{thm:main}.
Our ill-posedness argument proceeds along the following abstract framework:
\begin{enumerate}
    \item Decompose the solution under consideration into an explicit main term and an error term, possibly in a stronger topology than where ill-posedness is to be proved (in order to make this splitting possible in the first place).
    \item Demonstrate that when restricted to the weaker topology, the main term exhibits the desired ill-posedness properties.
    \item Show that in the regime that the main term exhibits ill-posedness, the error term is dominated by the main term; therefore the error does not destroy the ill-posedness.
\end{enumerate}
This abstract framework has been used previously to prove ill-posedness properties (e.g. norm inflation) of various initial-value problems for NLS: see, for instance, \cites{ChCoTa03, BeTa06, Ki18}.
The result itself falls into a class of results sometimes known as $C^k$ or analytic ill-posedness, in which it is shown that a data-to-solution map of some type lacks regularity with respect to the data.
As far as we are aware, this class of result was first investigated큰냄비 없으면 내꺼 가져올게 by Bourgain in \cites{Bo97}, in which the threshold Sobolev regularity for smoothness of the periodic KdV flow was determined.

The key tool in our analysis, corresponding to step (1), is the following small-data expansion of the scattering state near the origin:
\begin{equation}\label{eqn:expansion}
    \mcal{S}(\phi) = \phi - i \int_0^\infty e^{-is\Delta} F(e^{is\Delta}\phi)~ds + \tnm{error}
\end{equation}
for small Cauchy data $\phi$..
A similar expansion holds for the wave operator $\mcal{W}$, and the following discussion for $\mcal{S}$ holds equally for $\mcal{W}$, so for now let us speak only of $\mcal{S}$ for brevity.
We obtain \eqref{eqn:expansion} as an explication of the proof of Theorem \ref{thm:Sigma_scattering}; this is the reason for the restriction $p > \alpha(d)$.
The first term in this expansion arises from the linear evolution; the second term arises naturally as the first nonlinear term in the Picard iteration scheme for $u$.
We will formulate this expansion precisely, prove that it holds, and quantify the error term.
Part (1) of Theorem \ref{thm:main} will then emerge almost immediately as a consequence.
This sort of expansion appeared for the mass-critical NLS in \cites{CaOz08}, with a different proof.

Next we outline step (2). Assuming that the error is negligible compared to the remaining terms, this allows us to write
\[
    \mcal{S}(\phi) - \phi \sim - i \int_0^\infty e^{-is\Delta} F(e^{is\Delta}\phi)~ds.
\]
With a sufficiently strong estimate on the error term, it is easily seen that $\mcal{S}$ is differentiable at $0$ in the Fr\'echet sense, with derivative the identity operator; therefore the left-hand side of the above expression is exactly equal to the remainder term in a first-order Taylor approximation of the scattering operator near the origin.
Therefore the behavior of quotients such as
\[
    \frac{\|\mcal{S}(\phi) - \phi\|_{L^2}}{\|\phi\|_{L^2}^{1+\beta}}  
\]
as $\|\phi\|_{L^2}\to 0$ corresponds to the regularity (or lack thereof) of the scattering map beyond the first derivative.

Due to the small-data expansion of the scattering state, by $L^2$ duality the numerator in the above quotient is essentially equivalent to
\begin{align*}
    \left\| \int_0^\infty e^{-is\Delta} F(e^{is\Delta}\phi)~ds \right\|_{L^2} 
        &\geq \frac{1}{\|\phi\|_{L^2}} \left| \int_0^\infty \langle e^{-is\Delta} F(e^{is\Delta}\phi), \phi \rangle_{L_x^2}~ds\right|\\
        &= \frac{1}{\|\phi\|_{L^2}}\|e^{it\Delta}\phi\|_{L_{t,x}^{p+2}([0,\infty))}^{p+2}.
\end{align*}
We would therefore like to show that
\[
    \sup_{\phi\in B_R}\frac{\|e^{it\Delta}\phi\|_{L_{t,x}^{p+2}([0,\infty))}^{p+2}}{\|\phi\|_{L^2}^{2+\beta}} = \infty.
\]
To do this, it is required that $\|e^{it\Delta}\phi\|_{L_{t,x}^{p+2}([0,\infty))}$ cannot be controlled by large powers of the mass $\|\phi\|_{L^2}$.
This is a familiar result: such control can only be obtained in the mass-critical case $p = \frac{4}{d}$, for which it is the $L^2$ Strichartz estimate at the Tomas-Stein exponent, and in all other cases one can show by scaling that there exists an $L^2$-bounded sequence $(\phi_n)$ for which $\|e^{it\Delta}\phi_n\|_{L_{t,x}^{p+2}([0,\infty))} \to \infty$.
That the failure of a Strichartz estimate can lead to conclusions about the regularity of a data-to-solution map essentially dates back to \cites{Bo97}; see also \cites{Tao06.Dispersive} for a textbook treatment.

Therefore the main term exhibits the desired ill-posedness properties, and only step (3) remains: we must find a sequence of initial data $\phi$ which sends the above quotient to $\infty$, while ensuring that the main term dominates the error term.
We do this by introducing a two-parameter family of initial data $\phi_{\ve,\sigma}$, where $\ve$ is the amplitude and $\sigma$ is the spatial scale.
The error term is essentially a higher-order term in $\ve$ compared to the main term; therefore we can choose $\ve$ small so that the main term defeats the error, and then $\sigma$ to send the quotient to $\infty$.

We now briefly outline the organization of the paper.
In Section \ref{sec:notation} we go over the notation and basic preliminary results used in the rest of the paper.
In Section \ref{sec:expansion} we prove that $\mcal{S}$ and $\mcal{W}$ admit the expansion \eqref{eqn:expansion} with quantitative bounds on the error term.
In the interest of exposition, we do this in dimensions $d\geq 4$ only.
In this setting, the nonlinearity is subquadratic, which simplifies some technical details.
In Section \ref{sec:illposedness}, we leverage \eqref{eqn:expansion} with the error estimates to demonstrate that the ill-posedness properties of the main term carry over to ill-posedness of the entire operator.
In Appendix \ref{app:dim_1_proof} we show how to recover the cases $d=1,2,3$.

\section{Notation and preliminaries}\label{sec:notation}

Let $X$ and $Y$ be two quantities.
We write $X\lesssim Y$ if there exists a constant $C>0$ such that $X\leq CY$.
If $C$ depends on parameters $a_1,\ldots,a_n$, i.e. $C = C(a_1,\ldots,a_n)$ and we wish to indicate this dependence, then we will write $X\lesssim_{a_1,\ldots,a_n} Y$.
If $X\lesssim Y$ and $Y\lesssim X$, we write $X\sim Y$.
If the constant $C$ is small, then we write $X \ll Y$.
We also employ the asymptotic notation $\mcal{O}(f)$ and $o(f)$ with their standard meanings.

We will be working with the mixed spacetime Lebesgue spaces $L_t^qL_x^r(I\times\bb{R}^d)$, with norms
\[
    \|u\|_{L_t^qL_x^r(I\times\bb{R}^d)} = \left( \int_I \left( \int_{\bb{R}^d} |u(t,x)|^r~dx\right)^{\frac{q}{r}}~dt\right)^{\frac{1}{q}}.    
\]
We will abbreviate the norm as $\|u\|_{L_t^qL_x^r(I\times\bb{R}^d)} = \|u\|_{L_t^qL_x^r(I)}$.
We will often encounter the case $I = [0,\infty)$.
In this case we will further abbreviate the norm as $\|u\|_{L_t^qL_x^r([0,\infty)\times\bb{R}^d)} = \|u\|_{q,r}$.
For functions with no time dependence, we write $\|f\|_{L^r(\bb{R}^d)} = \|f\|_r$.
For $1\leq r \leq \infty$, we denote by $r'$ the H\"older conjugate: $1 = \frac{1}{r} + \frac{1}{r'}$.

We define the energy functional
\[
    E(v) = \int_{\bb{R}^d} \frac{1}{2}|\nabla v|^2 + \frac{1}{p+2}|v|^{p+2}~dx.    
\]
It is well known that the energy, as well as the $L^2$-norm, are conserved quantities for solutions to \eqref{eqn:NLS_Cauchy}.

We recall the following fundamental estimates for the Schr\"odinger equation.
\begin{prop}[Dispersive estimate]\label{prop:dispersive_estimate}
    Let $2\leq r \leq\infty$.
    Then for all $t\neq 0$,
    \[
        \|e^{it\Delta}\phi\|_{L_x^r(\bb{R}^d)} \lesssim_{r,d} |t|^{\frac{d}{2} - \frac{d}{r}}\|\phi\|_{L^{r'}(\bb{R}^d)}.
    \]
\end{prop}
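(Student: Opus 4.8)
The plan is to prove the two endpoint cases $r = \infty$ and $r = 2$ and obtain the intermediate ones by interpolation. First I would record the explicit kernel of the free propagator: for $t \neq 0$ and $\phi$ in the Schwartz class,
\[
    e^{it\Delta}\phi(x) = \frac{1}{(4\pi i t)^{d/2}}\int_{\bb{R}^d} e^{i|x-y|^2/(4t)}\,\phi(y)~dy,
\]
where the power $(4\pi i t)^{d/2}$ is defined by analytic continuation. One derives this on the Fourier side, where $e^{it\Delta}$ acts as multiplication by $e^{-it|\xi|^2}$: the inverse Fourier transform of the genuine Gaussian $e^{-z|\xi|^2}$ with $\mathrm{Re}\,z > 0$ equals $(4\pi z)^{-d/2}\,e^{-|x|^2/(4z)}$, and one passes to the boundary value $z = \varepsilon + it$, $\varepsilon \downarrow 0$, in the sense of tempered distributions, tracking the branch of $z^{-d/2}$ along the way. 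This branch-tracking limit is the only genuinely delicate point in the argument; the remainder is soft.

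From the kernel one reads off $|e^{it\Delta}\phi(x)| \leq (4\pi|t|)^{-d/2}\|\phi\|_1$ for every $x$, hence $\|e^{it\Delta}\phi\|_\infty \lesssim_d |t|^{-d/2}\|\phi\|_1$, which is the asserted bound at $r = \infty$ (so $r' = 1$). At the other endpoint Plancherel's theorem gives $\|e^{it\Delta}\phi\|_2 = \|e^{-it|\xi|^2}\hat\phi\|_2 = \|\hat\phi\|_2 = \|\phi\|_2$, so $e^{it\Delta}$ is an isometry on $L^2$ and the bound holds at $r = 2$ with constant $1$, the power of $|t|$ being $d(\tfrac12 - \tfrac12) = 0$.

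To conclude, fix $t \neq 0$ and apply the Riesz--Thorin interpolation theorem to the linear operator $e^{it\Delta}$, which is bounded $L^1 \to L^\infty$ with norm $\lesssim_d |t|^{-d/2}$ and bounded $L^2 \to L^2$ with norm $1$. For $\theta \in [0,1]$ this yields boundedness $L^{p_\theta} \to L^{q_\theta}$ with norm $\lesssim_d |t|^{-d(1-\theta)/2}$, where $\tfrac{1}{p_\theta} = (1-\theta) + \tfrac{\theta}{2}$ and $\tfrac{1}{q_\theta} = \tfrac{\theta}{2}$. Given $2 \leq r \leq \infty$, the choice $\theta = \tfrac{2}{r}$ gives $q_\theta = r$, $p_\theta = r'$, and the operator-norm bound becomes $|t|^{-d(1-\theta)/2} = |t|^{-d(\frac12 - \frac1r)}$, which is precisely the stated dispersive estimate; a routine density argument then removes the Schwartz restriction on $\phi$ and upgrades the bound to all $\phi \in L^{r'}$. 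As an alternative to invoking Riesz--Thorin one could estimate $\|e^{it\Delta}\phi\|_r$ directly by combining the pointwise kernel bound with the Hausdorff--Young inequality, but the interpolation route is the most economical. I anticipate no obstacle beyond the justification of the kernel formula noted above.
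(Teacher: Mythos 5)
Your argument is correct and is the canonical proof of this estimate: the paper itself states the proposition without proof, as a recalled standard fact, and your route (explicit Gaussian kernel obtained as the boundary value of $(4\pi z)^{-d/2}e^{-|x|^2/(4z)}$, giving the $L^1\to L^\infty$ bound $\lesssim_d |t|^{-d/2}$, Plancherel for the $L^2\to L^2$ isometry, and Riesz--Thorin in between, followed by density) is exactly the standard one, with the interpolation bookkeeping done correctly. The only point worth flagging is the sign of the exponent: what you prove is the decay estimate $\|e^{it\Delta}\phi\|_{r}\lesssim_{r,d}|t|^{-d(\frac12-\frac1r)}\|\phi\|_{r'}$, which is the correct statement, whereas the exponent $|t|^{\frac d2-\frac dr}$ as displayed in the proposition is evidently a sign typo, so no change to your argument is needed.
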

\begin{dfn}[Admissible pair]\label{dfn:admissible}
    Let $d\geq 1$ and $2 \leq q,r \leq \infty$.
    We say that $(q,r)$ is an \ita{admissible pair} if it satisfies the scaling relation $\frac{2}{q} + \frac{d}{r} = \frac{d}{2}$ and $(d,q,r)\neq(2,2,\infty)$.
    We say that $(\alpha,\beta)$ is a \ita{dual admissible pair} if $(\alpha',\beta')$ is an admissible pair.
\end{dfn}
\begin{prop}[Strichartz estimates]\label{prop:Strichartz}
    Let $d\geq 1$, let $(q,r)$ be an admissible pair, and let $(\alpha,\beta)$ be a dual admissible pair.
    Then for any interval $I\subset\bb{R}$,
    \[
        \|e^{it\Delta}\phi\|_{L_t^qL_x^r(I\times\bb{R}^d)} \lesssim \|\phi\|_{L^2(\bb{R}^d)},
    \]
    \[
        \left\| \int_0^t e^{i(t-s)\Delta} F(s)~ds \right\|_{L_t^qL_x^r(I\times\bb{R}^d)} \lesssim \|F\|_{L_t^\alpha L_x^\beta(I\times\bb{R}^d)}.    
    \]
\end{prop}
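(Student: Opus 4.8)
\textbf{Plan for the proof of Proposition \ref{prop:Strichartz}.}

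The plan is to derive both the homogeneous and the inhomogeneous Strichartz estimates from the dispersive estimate (Proposition \ref{prop:dispersive_estimate}) together with the $L^2$ conservation of the free propagator, following the standard $TT^*$ argument. First I would record the two inputs I am allowed to use: the unitarity $\|e^{it\Delta}\phi\|_{L_x^2} = \|\phi\|_{L_x^2}$ (which follows from the Fourier representation of the propagator), and the decay bound of Proposition \ref{prop:dispersive_estimate}. Interpolating the $L^2\to L^2$ bound with the $L^1\to L^\infty$ decay bound gives, for any $2\le r\le\infty$ and $t\neq 0$,
\[
    \|e^{it\Delta}\phi\|_{L_x^r} \lesssim_{r,d} |t|^{-d\left(\frac12 - \frac1r\right)} \|\phi\|_{L_x^{r'}},
\]
which is the form of the dispersive estimate I will feed into the abstract machinery.

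Next I would set up the $TT^*$ step. Fix an admissible pair $(q,r)$, so that $\frac{2}{q} + \frac{d}{r} = \frac{d}{2}$ with $(d,q,r)\neq(2,2,\infty)$. Let $T\colon L_x^2 \to L_t^q L_x^r(I\times\mathbb{R}^d)$ be $T\phi = e^{it\Delta}\phi$. Its formal adjoint is $T^* F = \int_I e^{-is\Delta} F(s)\,ds$, and $TT^* F = \int_I e^{i(t-s)\Delta} F(s)\,ds$. The homogeneous estimate $\|T\phi\|_{L_t^q L_x^r} \lesssim \|\phi\|_{L_x^2}$ is equivalent, by duality, to $\|T^* F\|_{L_x^2}\lesssim \|F\|_{L_t^{q'}L_x^{r'}}$, and also equivalent to $\|TT^* F\|_{L_t^q L_x^r}\lesssim\|F\|_{L_t^{q'}L_x^{r'}}$. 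So it suffices to prove this last bound. Writing out $TT^*F$ and applying Minkowski's inequality followed by the dispersive estimate in the $x$-variable gives
\[
    \|TT^*F(t)\|_{L_x^r} \lesssim \int_I |t-s|^{-d\left(\frac12-\frac1r\right)} \|F(s)\|_{L_x^{r'}}\,ds.
\]
The exponent $d\left(\frac12 - \frac1r\right)$ equals $\frac{2}{q}$ by the admissibility relation, so the time integral is a one-dimensional fractional integration (Hardy–Littlewood–Sobolev) operator of order $1 - \frac{2}{q}$ from $L_t^{q'}$ to $L_t^{q}$. One checks the HLS exponents match exactly: $\frac{1}{q'} = \frac{1}{q} + \left(1 - \frac{2}{q}\right)$, i.e. $\frac1q + \frac1{q'} = 1$, which holds. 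This requires $0 < 1 - \frac{2}{q} < 1$, i.e. $2 < q < \infty$; the endpoint $q=\infty$ (i.e. $r=2$) is handled trivially by unitarity and Minkowski, and $q=2$ is precisely the excluded endpoint (for $d=2$) or, for $d\ge 3$, requires the Keel–Tao endpoint argument via a Whitney-type dyadic decomposition in $|t-s|$ combined with bilinear interpolation. I would state that I treat the non-endpoint $q>2$ case in detail and invoke \cites{KeTa98} for the endpoint.

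For the inhomogeneous estimate with a general dual admissible exponent $(\alpha,\beta)$ possibly different from $(q',r')$, the plan is the Christ–Kiselev lemma: the retarded operator $\int_0^t e^{i(t-s)\Delta}F(s)\,ds$ differs from the non-retarded one $\int_I e^{i(t-s)\Delta}F(s)\,ds = T T^* F$ only by the insertion of the cutoff $\mathbf{1}_{\{s<t\}}$, and since $\alpha > 1 = $ the relevant lower exponent (which holds whenever $(\alpha,\beta)\neq$ the forbidden endpoint), Christ–Kiselev upgrades the bound on the full-line operator to a bound on the retarded one with the loss of only a constant. The full-line bound $\|\int_I e^{i(t-s)\Delta}F(s)\,ds\|_{L_t^q L_x^r}\lesssim\|F\|_{L_t^\alpha L_x^\beta}$ itself is obtained by factoring through $L_x^2$: $\|\int_I e^{i(t-s)\Delta}F\|_{L_t^qL_x^r} = \|T(T^*F)\|_{L_t^qL_x^r}\lesssim\|T^*F\|_{L_x^2}\lesssim\|F\|_{L_t^{\alpha}L_x^{\beta}}$, where the first inequality is the homogeneous estimate and the second is its dual applied to the admissible pair $(\alpha',\beta')$. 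The main obstacle is really bookkeeping the endpoint $q=2$, $d\ge 3$ case, which does not follow from HLS and genuinely needs the Keel–Tao bilinear argument; for the purposes of this paper one may simply cite it, since the nonlinear analysis later uses only non-endpoint admissible pairs. I would remark that all implicit constants depend only on $d$ and the exponents, and that the estimates are stated for arbitrary intervals $I$ because the propagator bounds are translation-invariant in $t$ and the cutoff argument respects restriction to subintervals.
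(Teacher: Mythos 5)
The paper offers no proof of Proposition \ref{prop:Strichartz}: it is recorded in the preliminaries as a classical fact (Strichartz, Ginibre--Velo, Keel--Tao), so there is no argument of the paper to compare yours against. Your sketch is the standard textbook proof and is essentially correct: unitarity interpolated with the $L^1\to L^\infty$ bound gives the $L^{r'}\to L^r$ dispersive estimate, the $TT^*$ reduction plus one-dimensional Hardy--Littlewood--Sobolev in time handles all non-endpoint admissible pairs ($q>2$), the case $q=\infty$ is trivial, the endpoint $q=2$, $d\ge 3$ is cited from Keel--Tao, and the retarded inhomogeneous estimate with a general dual admissible pair $(\alpha,\beta)$ follows by factoring through $L_x^2$ and applying the Christ--Kiselev lemma. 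One correction to the last step: the hypothesis of Christ--Kiselev is not ``$\alpha>1$'' but that the source Lebesgue exponent in time be strictly smaller than the target one, i.e. $\alpha<q$; since dual admissibility gives $\alpha\le 2$ and admissibility gives $q\ge 2$, this holds in all cases except the doubly-endpoint one $\alpha=q=2$ (possible only for $d\ge 3$), which must instead be taken directly from the Keel--Tao retarded estimates. This does not affect the present paper, since the pairs actually used satisfy $q=\frac{4(p+2)}{dp}>2$ in the mass-subcritical range. As a side remark, the decay exponent you use, $|t|^{-(\frac d2-\frac dr)}$, is the correct one; the exponent in Proposition \ref{prop:dispersive_estimate} as printed is missing the minus sign.
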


Lastly, we define our notion of pointwise H\"older regularity.
\begin{dfn}[Pointwise H\"older space \cites{Andersson97}]\label{dfn:Holder_space}
    Let $X$ and $Y$ be Banach spaces.
    Let $x_0\in X$ and $U$ a convex open neighborhood of $x_0$.
    Fix $s>0$, and let $n$ be the integer part of $s$.
    For $s>0$, we say that the map $G:X\to Y$ belongs to the \ita{pointwise H\"older space} $C^s(x_0)$ if for all $h\in X$ with $\|h\|_X = 1$, there exist coefficients $\{a_j(x_0; h)\}_{j=0}^n \subset Y$ such that
    \[
        \|G(x_0 + \ve h) - G(x_0) - \sum_{j=1}^n \ve^j a_j(x_0; h)\|_Y \lesssim \ve^s   
    \]
    for all $\ve>0$ sufficiently small, with the implicit constant independent of the direction $h$.
\end{dfn}
Our main interest in $C^s(x_0)$ is that membership in $C^s(x_0)$ is a necessary, though not sufficient, condition for a stronger notion of regularity of order $s$:
\begin{lem}\label{lem:Cs_necessary}
    Let $X$ and $Y$ be Banach spaces.
    Let $U\subset X$ be a convex neighborhood of $x_0\in X$.
    Let $G:U\to Y$ be a map, and suppose $G\notin C^s(x_0)$ with $n < s < n+1$.
    Then $d^nG(x;h)$ (the $n$-th Gateaux derivative of $G$), if it exists for $x\in U$, cannot be a H\"older continuous function of $x$ of order $s-n$ with H\"older seminorm uniformly bounded in $h$.
\end{lem}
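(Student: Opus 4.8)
The plan is to argue by contraposition. Assume that $d^nG(x;h)$ exists for every $x$ in a neighborhood of $x_0$ and that, as a function of $x$, it is H\"older continuous of order $\gamma:=s-n$ there, with H\"older seminorm bounded by a constant $M$ independent of the unit direction $h$; I will show this forces $G\in C^s(x_0)$, contradicting the hypothesis $G\notin C^s(x_0)$. Throughout I use the convention matching the notation $d^nG(x;h)$: $d^kG(x;h)$ is the $k$-fold iterated directional derivative of $G$ at $x$, all derivatives taken in the single direction $h$, and ``$d^nG$ exists'' is understood to include the existence of $d^1G,\dots,d^{n-1}G$ as well. When $n=0$ there is nothing to do, since the expansion in Definition \ref{dfn:Holder_space} is then empty and the claimed estimate is exactly the H\"older continuity of $G$ at $x_0$; so assume $n\geq 1$.

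The first step is to pass from Gateaux derivatives on an open set to honest one-variable derivatives. Fix a unit vector $h\in X$. Since $U$ is open, convex, and contains $x_0$, there is $\rho>0$, independent of $h$, so that $x_0+th\in U$ for $0\leq t\leq\ve$ whenever $0<\ve<\rho$. Put $g(t):=G(x_0+th)$, a $Y$-valued function on $[0,\ve]$. A short induction on $k$ shows that $g$ is $k$ times differentiable on $[0,\ve]$ with $g^{(k)}(t)=d^kG(x_0+th;h)$ for $0\leq k\leq n$: for the inductive step one differentiates $t\mapsto d^{k-1}G(x_0+th;h)$ at $t$ and recognizes the limit as $d^kG(x_0+th;h)$ by the definition of $d^kG$. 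Consequently $g\in C^{n-1}([0,\ve],Y)$, and since $g^{(n)}$, that is $t\mapsto d^nG(x_0+th;h)$, is H\"older and hence continuous, in fact $g\in C^n([0,\ve],Y)$.

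Next I would invoke the Banach-space-valued Taylor formula with integral remainder (reduce to the scalar case by pairing with functionals in $Y^*$ and using Hahn--Banach, or argue directly with the Bochner integral of the continuous function $g^{(n)}$):
\[
    g(\ve)-\sum_{k=0}^{n}\frac{\ve^k}{k!}\,g^{(k)}(0)
    =\int_0^\ve\frac{(\ve-t)^{n-1}}{(n-1)!}\,\bigl(g^{(n)}(t)-g^{(n)}(0)\bigr)\,dt .
\]
The H\"older hypothesis bounds the integrand: $\|g^{(n)}(t)-g^{(n)}(0)\|_Y=\|d^nG(x_0+th;h)-d^nG(x_0;h)\|_Y\leq M\|th\|_X^{\gamma}=M\,t^{\gamma}$, with $M$ independent of $h$. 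Hence the remainder has $Y$-norm at most a constant times $\int_0^\ve(\ve-t)^{n-1}t^{\gamma}\,dt=c_{n,\gamma}\,\ve^{\,n+\gamma}\lesssim\ve^s$ (the constant $c_{n,\gamma}$ a finite Beta integral, finite since $n\geq 1$ and $\gamma>0$), uniformly over $0<\ve<\rho$ and over unit $h$. Setting $a_j(x_0;h):=\tfrac1{j!}d^jG(x_0;h)$ for $1\leq j\leq n$ and using $g(0)=G(x_0)$ and $g(\ve)=G(x_0+\ve h)$, the displayed identity becomes
\[
    \Bigl\|\,G(x_0+\ve h)-G(x_0)-\sum_{j=1}^{n}\ve^j a_j(x_0;h)\,\Bigr\|_Y\lesssim\ve^s ,
\]
with the implicit constant independent of $h$. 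By Definition \ref{dfn:Holder_space} this says $G\in C^s(x_0)$, the desired contradiction.

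I do not expect a genuine obstacle: the argument is just Taylor's theorem with a H\"older remainder. The two points that need care are, first, the chain-rule identity $g^{(k)}(t)=d^kG(x_0+th;h)$, which legitimizes treating the directional derivatives of $G$ on $U$ as ordinary derivatives of the one-variable function $g$ along the segment; and second, the book-keeping that makes every constant independent of $h$, since membership in $C^s(x_0)$ requires an $h$-uniform remainder bound---this is precisely where the assumed uniformity of the H\"older seminorm of $d^nG$ over unit directions $h$ is used.
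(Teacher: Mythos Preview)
Your proof is correct and follows essentially the same approach as the paper's: both argue by contraposition, invoke Taylor's theorem with integral remainder, and bound the remainder $\int_0^1 (1-r)^{n-1}\bigl(d^nG(x_0+r\ve h;h)-d^nG(x_0;h)\bigr)\,dr$ using the assumed uniform H\"older continuity of $d^nG$ to conclude $G\in C^s(x_0)$. The only cosmetic difference is that you reduce explicitly to the one-variable function $g(t)=G(x_0+th)$ and appeal to the scalar Taylor formula (via Hahn--Banach or the Bochner integral), whereas the paper cites the Banach-space Taylor theorem of Graves directly; the underlying computation is identical.
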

For the proof of Lemma \ref{lem:Cs_necessary}, as well as the relationship between Definition \ref{dfn:Holder_space} and other notions of regularity, we refer the reader to Appendix \ref{app:Holder_spaces}.

\section{Small-data expansion of the wave and initial-to-scattering-state operators}\label{sec:expansion}

In this section we undertake step (1) of our ill-posedness framework, the decomposition of the wave operator and the initial-to-scattering-state map.

Henceforth we take $q = \frac{4(p+2)}{dp}$; then $(q,p+2)$ is an admissible pair.
We write $\mcal{T}_- = \mcal{S}$, $\mcal{T}_+ = \mcal{W}$, regarding them as maps $\mcal{T}_\pm:\Sigma\to L^2$.
\begin{prop}[Small-data expansion]\label{prop:small_data_expansion}
    Let $\alpha(d) < p < \frac{4}{d}$.
    Then there exists $\ve = \ve(d,p)>0$ small so that if $\|\phi\|_\Sigma < \ve$, then
    \[
        \mcal{T}_\pm(\phi) = \phi \pm i \int_0^\infty e^{-is\Delta}F(e^{is\Delta}\phi)~ds + e_\pm(\phi)
    \]
    where the error term $e_\pm(\phi)$ satisfies
    \begin{equation}\label{eqn:error_estimate}
        \|e_\pm(\phi)\|_2 \lesssim_{d,p} \|\phi\|_\Sigma^{\frac{2(2p+1)}{p+2}}.
    \end{equation}
\end{prop}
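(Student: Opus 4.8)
The plan is to prove the expansion by running the Picard iteration for \eqref{eqn:NLS_Cauchy} in the Strichartz spaces adapted to the admissible pair $(q,p+2)$, keeping track of the zeroth- and first-order terms explicitly and collecting everything else into $e_\pm$. Let me describe the forward-time case $\mcal{T}_+ = \mcal{W}$; the $\mcal{T}_- = \mcal{S}$ case is parallel. Given $\phi \in \Sigma$ with $\|\phi\|_\Sigma < \ve$, Theorem \ref{thm:Sigma_scattering} furnishes the unique global solution $u \in C_{t,\mathrm{loc}}\Sigma$ with $\|e^{-it\Delta}u(t) - \phi\|_\Sigma \to 0$, and inspecting that proof (this is where $p > \alpha(d)$ enters) gives a global Strichartz bound $\|u\|_{L_t^q L_x^{p+2}([0,\infty))} \lesssim \|\phi\|_\Sigma$ together with smallness of this quantity. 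The solution satisfies the Duhamel formula written from $+\infty$:
\[
    u(t) = e^{it\Delta}\phi - i\int_t^\infty e^{i(t-s)\Delta} F(u(s))\, ds,
\]
so that $\mcal{W}(\phi) = u(0) = \phi - i\int_0^\infty e^{-is\Delta} F(u(s))\, ds$. Subtracting the first nonlinear iterate, the error is
\[
    e_+(\phi) = -i \int_0^\infty e^{-is\Delta}\bigl( F(u(s)) - F(e^{is\Delta}\phi) \bigr)\, ds.
\]

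The heart of the matter is the $L^2$ bound \eqref{eqn:error_estimate} on $e_+(\phi)$. I would estimate it by the dual Strichartz inequality (Proposition \ref{prop:Strichartz}) against a suitable dual admissible pair, reducing to controlling $\|F(u) - F(e^{it\Delta}\phi)\|_{L_t^{\alpha} L_x^{\beta}}$. Here one uses the pointwise bound $|F(u) - F(v)| \lesssim (|u|^p + |v|^p)|u-v|$ valid since $0 < p$ (and in $d \geq 4$ the nonlinearity is subquadratic, $p < 1$, which is what makes this clean — no need to differentiate $F$), followed by Hölder in spacetime to split off $|u-v|$ in the Strichartz norm $L_t^q L_x^{p+2}$ and the factor $|u|^p + |v|^p$ in a dual-exponent norm. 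The difference $w = u - e^{it\Delta}\phi = -i\int_t^\infty e^{i(t-s)\Delta}F(u(s))\,ds$ is itself controlled by another application of dual Strichartz: $\|w\|_{L_t^q L_x^{p+2}} \lesssim \|F(u)\|_{L_t^{\alpha'} L_x^{\beta'}} \lesssim \|u\|_{L_t^q L_x^{p+2}}^{p+1} \lesssim \|\phi\|_\Sigma^{p+1}$, using that the free and full evolutions have the same Strichartz bounds and that $(p+2)(p+1)$-type exponent bookkeeping closes (this is exactly the contraction estimate from the proof of Theorem \ref{thm:Sigma_scattering}). Combining, $\|e_+(\phi)\|_2 \lesssim (\|u\|_{L_t^q L_x^{p+2}}^p)\cdot \|w\|_{L_t^q L_x^{p+2}} \lesssim \|\phi\|_\Sigma^{p}\cdot\|\phi\|_\Sigma^{p+1} = \|\phi\|_\Sigma^{2p+1}$. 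To land on the stated exponent $\frac{2(2p+1)}{p+2}$ rather than $2p+1$, one should instead measure the gain in the scale-invariant $L_{t,x}^{p+2}$ norm: the iteration naturally produces $\|w\|_{L_{t,x}^{p+2}([0,\infty))} \lesssim \|\phi\|_\Sigma^{\theta}$ with $\theta$ dictated by how the $L^{p+2}$ norm of a Duhamel term scales against the $\Sigma$ norm, and carrying $e_+$ through dual Strichartz with the right pair converts powers of the $L_{t,x}^{p+2}$ norm into the exponent $\frac{2(2p+1)}{p+2}$ in $L^2$. I would do this exponent accounting carefully at the end; it is a matter of choosing the dual admissible pair so that $\frac{1}{\alpha'} = \frac{p+1}{q}$ and $\frac{1}{\beta'} = \frac{p+1}{p+2}$, i.e. estimating $F(u)$ purely in $L_{t,x}^{\frac{p+2}{p+1}}$, and tracking that $\|u\|_{L_{t,x}^{p+2}} \lesssim \|\phi\|_{\Sigma}^{2/(p+2)}$ by interpolating the Strichartz bound against the pseudoconformal/weighted estimates built into $\Sigma$.

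The main obstacle I anticipate is not the abstract structure — the Picard-iteration decomposition is standard — but rather (a) correctly extracting the \emph{global} Strichartz bound $\|u\|_{L_t^q L_x^{p+2}} \lesssim \|\phi\|_\Sigma$ with an implied constant uniform for $\|\phi\|_\Sigma < \ve$ from \cites{GiVe79.Cauchy, GiVe19.Scattering}, which is the step that genuinely uses $p > \alpha(d)$ and is stated there in possibly different notation; and (b) the exponent bookkeeping needed to convert the naive bound $\|\phi\|_\Sigma^{2p+1}$ into the sharp $\|\phi\|_\Sigma^{\frac{2(2p+1)}{p+2}}$, which requires working in the critically-scaling $L_{t,x}^{p+2}$ norm and interpolating against the $\Sigma$-norm via the weighted (Galilei/pseudoconformal) structure rather than brute-forcing through $\|u\|_{L_t^q L_x^{p+2}} \lesssim \|\phi\|_\Sigma$ alone. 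Once the expansion and \eqref{eqn:error_estimate} are in hand, part (1) of Theorem \ref{thm:main} follows almost immediately: the error estimate shows $\mcal{T}_\pm$ is Fréchet differentiable at $0$ with derivative the identity, and a direct computation of $\|\int_0^\infty e^{-is\Delta}F(e^{is\Delta}(\ve h))\,ds\|_2 \sim \ve^{1+p}$ for suitable test directions $h$ pins down the optimal Hölder exponent $1+p$.
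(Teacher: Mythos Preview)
Your overall architecture is right: write $e_\pm$ as the Duhamel integral of $F(u)-F(e^{is\Delta}\phi)$, apply the pointwise bound $|F(u)-F(v)|\lesssim(|u|^p+|v|^p)|u-v|$, estimate by dual Strichartz, and then bound $u-e^{it\Delta}\phi$ by a second Strichartz/H\"older step. You also correctly locate the source of the exponent $\tfrac{2}{p+2}$ in the pseudoconformal decay. But neither of the two specific H\"older splittings you propose actually closes in the mass-subcritical range, and this is where the gap lies.

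Your first route places everything in $L_t^qL_x^{p+2}$ and claims $\|F(u)\|_{L_t^{\alpha'}L_x^{\beta'}}\lesssim\|u\|_{L_t^qL_x^{p+2}}^{p+1}$ for a dual admissible $(\alpha,\beta)$. Matching exponents forces $\alpha'=q/(p+1)$, $\beta'=(p+2)/(p+1)$, and one checks that $(\alpha,\beta)$ is admissible only when $q=p+2$, i.e.\ $p=\tfrac{4}{d}$. Your second route, placing $F(u)$ in $L_{t,x}^{(p+2)/(p+1)}$, fails for the same reason: $(p+2,p+2)$ is admissible only at the mass-critical exponent. So for $\alpha(d)<p<\tfrac{4}{d}$ there is no choice of a single admissible $L_t^aL_x^{p+2}$ norm that makes the H\"older splitting land in a dual admissible pair.

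The paper's fix is to introduce the \emph{non-admissible} norm $L_t^{pq/(q-2)}L_x^{p+2}$ for the factor $|u|^p$ (and for $|e^{it\Delta}\phi|^p$), while keeping the difference $u-e^{it\Delta}\phi$ in the admissible $L_t^qL_x^{p+2}$. With this splitting the H\"older exponents match $(q',(p+2)')$ exactly. The non-admissible norm cannot be controlled by Strichartz; instead one integrates the small-data pseudoconformal bound $\|u(t)\|_{p+2}\lesssim\langle t\rangle^{-dp/(2(p+2))}\|\phi\|_\Sigma^{2/(p+2)}$ directly (this is Lemma~\ref{lem:global_nonlinear_estimate}), and the time integral converges precisely when $p>\alpha(d)$. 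The same decay also gives $\|u\|_{q,p+2}\lesssim\|\phi\|_\Sigma^{2/(p+2)}$, which is sharper than the $\|\phi\|_\Sigma$ you expected from a contraction argument. Running the two-step estimate then produces $2p+1$ factors of $\|\phi\|_\Sigma^{2/(p+2)}$, hence the exponent $\tfrac{2(2p+1)}{p+2}$. So the correction is not an interpolation trick but a direct use of pseudoconformal decay to control a norm that Strichartz cannot reach; once you insert the right non-admissible norm, your outline goes through verbatim.
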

The proof of Proposition \ref{prop:small_data_expansion} is based on the proof of Theorem \ref{thm:Sigma_scattering} given in \cites{CaWe92}.
The key fact underlying all of these arguments is the pseudoconformal energy estimate:
\begin{lem}[Pseudoconformal energy estimate]\label{lem:effective_pseudoconformal}
    Let $0 < p < \frac{4}{d}$ and $\phi\in\Sigma$.
    Let $u$ be the global solution to \eqref{eqn:NLS_Cauchy} with initial data $\phi$.
    Then for all $t\geq 1$,
    \begin{equation}\label{eqn:effective_pcf_general}
        \|u(t)\|_{L_x^{p+2}}^{p+2} \lesssim_{d,p} t^{-\frac{dp}{2}}(\|u(1)\|_\Sigma^2 + \|u(1)\|_\Sigma^{p+2}).    
    \end{equation}
    Moreover, if $\ve = \ve(d,p)>0$ is sufficiently small and $\|\phi\|_\Sigma < \ve$, then for all $t\geq 0$
    \begin{equation}\label{eqn:effective_pcf_small}
        \|u(t)\|_{L_x^{p+2}}^{p+2} \lesssim_{d,p} \langle t\rangle^{-\frac{dp}{2}}\|\phi\|_\Sigma^2.  
    \end{equation}
\end{lem}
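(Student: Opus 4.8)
The plan is to establish the pseudoconformal energy estimate in Lemma \ref{lem:effective_pseudoconformal} by the classical route: derive a differential inequality for the pseudoconformal energy
\[
    \Phi(t) = \frac{1}{2}\|(x + 2it\nabla)u(t)\|_{L_x^2}^2 + \frac{t^2}{p+2}\int_{\bb{R}^d}|u(t,x)|^{p+2}~dx,
\]
which one obtains by applying the operator $J(t) = x + 2it\nabla = e^{it\Delta}(x)e^{-it\Delta}$ to the equation and using that $J(t)$ commutes with $i\pt_t + \Delta$. A direct computation (using the Morawetz/Ginibre--Velo virial identity for $\Phi$) gives
\[
    \frac{d}{dt}\Phi(t) = \left(1 - \frac{dp}{4}\right)\frac{2t}{p+2}\int_{\bb{R}^d}|u(t,x)|^{p+2}~dx,
\]
so that since $p < \tfrac{4}{d}$ the coefficient $1 - \tfrac{dp}{4}$ is positive and $\Phi$ is monotone increasing; in particular, the potential part is controlled: $\frac{t^2}{p+2}\|u(t)\|_{L_x^{p+2}}^{p+2} \leq \Phi(t)$ for all $t$. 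To close \eqref{eqn:effective_pcf_general}, I would integrate the differential identity from $1$ to $t$, bound $\int_1^t \frac{2s}{p+2}\|u(s)\|_{p+2}^{p+2}~ds \lesssim \int_1^t s^{-1}\Phi(s)~ds$, and apply Gronwall; the decay rate $t^{-dp/2}$ then comes from unwinding the $t^2$ weight together with the slow growth of $\Phi$. Here one must quantify $\Phi(1)$ in terms of $\|u(1)\|_\Sigma$: this is immediate since $\|J(1)u(1)\|_2 \lesssim \|u(1)\|_2 + \|xu(1)\|_2 + \|\nabla u(1)\|_2 = \|u(1)\|_\Sigma$ by boundedness of the Galilei operator, and $\|u(1)\|_{p+2}^{p+2} \lesssim \|u(1)\|_\Sigma^{p+2}$ by Gagliardo--Nirenberg/Sobolev embedding ($\Sigma \hookrightarrow H^1 \hookrightarrow L^{p+2}$ since $p < \tfrac{4}{d} \le \tfrac{4}{d-2}$).

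For the small-data refinement \eqref{eqn:effective_pcf_small}, the point is to upgrade the polynomial-in-$\Phi$ Gronwall bound to one that is linear in $\|\phi\|_\Sigma^2$ and valid for all $t \geq 0$ (not just $t \geq 1$). The strategy is a continuity/bootstrap argument: assume $\|\phi\|_\Sigma < \ve$, propagate $\|u(t)\|_\Sigma$ (equivalently, control $\|u(t)\|_2$, $\|\nabla u(t)\|_2$, $\|xu(t)\|_2$) on a maximal interval via the conservation of mass and energy (which handles $\|u\|_2$ and, since $E$ is coercive, $\|\nabla u\|_2$) together with the $\Phi$-estimate (which, via $\|J(t)u(t)\|_2^2 \lesssim \Phi(t)$ and $\|xu(t)\|_2 \le \|J(t)u(t)\|_2 + 2t\|\nabla u(t)\|_2$, handles the weighted norm on compact time intervals). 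On the unbounded part, the monotone identity shows $\Phi$ grows only through $\int \frac{2s}{p+2}\|u(s)\|_{p+2}^{p+2}~ds$; feeding back $\|u(s)\|_{p+2}^{p+2} \le (p+2)s^{-2}\Phi(s)$ gives $\Phi'(s) \lesssim s^{-1}\Phi(s)$, hence $\Phi(s) \lesssim \Phi(1)(1+s)^{C}$ with $C = 2(1 - dp/4) \in (0,2)$, and for small data $\Phi(1) \lesssim \ve^2$. One then must check $C$ is small enough that the resulting bound $\|u(t)\|_{p+2}^{p+2} \lesssim t^{-2}\Phi(t) \lesssim \ve^2 t^{-2+C} = \ve^2 t^{-dp/2}$ matches the claimed rate — and this works out because $-2 + C = -2 + 2 - dp/2 = -dp/2$ exactly, which is the whole reason the exponent in \eqref{eqn:effective_pcf_small} is $-dp/2$ and why the bracket $\langle t\rangle$ covers $0 \le t \le 1$ (where $\|u(t)\|_{p+2}^{p+2} \lesssim \|u(t)\|_\Sigma^{p+2} \lesssim \ve^{p+2} \lesssim \ve^2$ trivially).

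The main obstacle I anticipate is the bootstrap closing the control of $\|u(t)\|_\Sigma$ uniformly in time for small data — specifically ensuring that the weighted norm $\|xu(t)\|_2$ does not grow. The naive bound $\|xu(t)\|_2 \le \|J(t)u(t)\|_2 + 2t\|\nabla u(t)\|_2$ has a term linear in $t$, which is fine for the \emph{compactly supported} propagation of the $\Sigma$ norm needed to make sense of $\Phi$, but one should be careful that in the differential identity for $\Phi$ everything is justified (local well-posedness in $\Sigma$, density of Schwartz data, approximation); this is exactly the content of part (1) of Theorem \ref{thm:Sigma_scattering}, which I would invoke, so the genuine analytic work is the a priori estimate on $\Phi$. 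A secondary subtlety is that for $p < 1$ the nonlinearity $F(u) = |u|^p u$ is only $C^{1,p}$, not $C^2$, so any manipulation requiring two derivatives of $F$ (e.g. in differentiating $\int |u|^{p+2}$) must be done at the level of the weak formulation or via the standard regularization; this is routine but worth flagging, and is presumably why the exposition in Section \ref{sec:expansion} is deferred to $d \ge 4$ where the nonlinearity is subquadratic.
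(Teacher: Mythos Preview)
Your proposal is correct and follows essentially the same route as the paper: define the pseudoconformal energy, compute its time derivative to get $\dot{\Phi}(t) \sim t^{-1}(\text{potential part}) \le t^{-1}\Phi(t)$, apply Gronwall from $t=1$, and read off the $t^{-dp/2}$ decay after unwinding the $t^2$ weight; the constant $\Phi(1)$ is controlled by $\|u(1)\|_\Sigma^2 + \|u(1)\|_\Sigma^{p+2}$ via Sobolev embedding, exactly as you say. The only difference worth noting is that the paper handles the small-data statement \eqref{eqn:effective_pcf_small} in one line by invoking local well-posedness in $\Sigma$ on $[0,1]$ to get $\|u(1)\|_\Sigma \lesssim \|\phi\|_\Sigma$, so that $\|u(1)\|_\Sigma^2 + \|u(1)\|_\Sigma^{p+2} \lesssim \|\phi\|_\Sigma^2$ when $\|\phi\|_\Sigma$ is small; your worry about controlling $\|xu(t)\|_2$ uniformly in $t$ is therefore unnecessary, since only the value at $t=1$ enters (and for $t\in[0,1]$ you already note that Gagliardo--Nirenberg plus $\langle t\rangle \sim 1$ suffices).
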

The proof is well-known, though it is usually stated without the explicit dependence on $u(1)$ or the small-data statement.
We reproduce it here for the reader's convenience; our proof follows the presentation in \cites{Murphy.scattering}.
\begin{proof}
    Let $J(t) = x+it\nabla$.
    \eqref{eqn:effective_pcf_general} follows from the more general inequality
    \[
        \|J(t)u(t)\|_{L_x^2}^2 + \|u(t)\|_{L_x^{p+2}}^{p+2} \lesssim_{d,p} t^{-\frac{dp}{2}}(\|u(1)\|_\Sigma^2 + \|u(1)\|_\Sigma^{p+2}).
    \]
    Expanding,
    \[
        \|J(t)u(t)\|_{L_x^2}^2 = \int |x|^2|u|^2 - 2t\Im(\ol{u}\nabla u\cdot 2x) + 4t^2|\nabla u|^2~dx.    
    \]
    Next, we invoke the virial identity for solutions to \eqref{eqn:NLS_Cauchy}:
    \[
        \frac{d^2}{dt^2}\int |x|^2|u|^2~dx = \frac{d}{dt}2\Im \int \ol{u}\nabla u\cdot 2x~dx = \int \frac{4dp}{p+2}|u|^{p+2} + 8|\nabla u|^2~dx.
    \]
    It follows that
    \[
        \frac{d}{dt}\int |x|^2|u|^2 - 2t\Im \ol{u}\nabla u\cdot 2x~dx = -\int \frac{4dpt}{p+2}|u|^{p+2} + 8t|\nabla u|^2~dx. 
    \]
    By conservation of energy,
    \[
        4t^2\frac{d}{dt}\int |\nabla u|^2 ~dx = -8t^2\frac{d}{dt}\int \frac{1}{p+2}|u|^{p+2}~dx.    
    \]
    Combining, we obtain
    \[
        \frac{d}{dt}\int |J(t)u(t)|^2~dx = - \int \frac{4dpt}{p+2}|u|^{p+2} - 8t^2\frac{d}{dt}\int \frac{1}{p+2}|u|^{p+2}~dx.    
    \]
    Defining
    \[
        e(t) = \int |J(t)u(t)|^2 + \frac{8t^2}{p+2}|u|^{p+2}~dx,    
    \]
    we find that
    \[
        \dot{e}(t) = \frac{4t(4-dp)}{p+2}\int|u|^{p+2}~dx = \frac{2-\frac{dp}{2}}{t}\frac{8t^2}{p+2}\int |u|^{p+2}~dx.    
    \]
    With
    \[
        U(t) = \frac{8t^2}{p+2}\int |u|^{p+2}~dx,    
    \]
    it follows that
    \[
        U(t) \leq e(t) = e(1) + \int_1^t \dot{e}(s)~ds = e(1) + \int_1^t \frac{2-\frac{dp}{2}}{s}U(s)~ds.    
    \]
    By Gronwall's inequality and Sobolev embedding, we conclude that
    \[
        U(t) 
            \leq e(1)\exp\left( \int_1^t \frac{2-\frac{dp}{2}}{s}~ds\right) 
            \lesssim (\|u(1)\|_\Sigma^2 + \|u(1)\|_\Sigma^{p+2})t^{2-\frac{dp}{2}}
    \]
    which gives the claim.
    The small-data statement now follows from the local well-posedness theory for NLS in $\Sigma$.
\end{proof}
Proposition \ref{prop:small_data_expansion} then follows from the following two estimates:
\begin{lem}\label{lem:global_nonlinear_estimate}
    Let $d\geq 1$, $\alpha(d) < p < \frac{4}{d}$, and $\phi\in\Sigma$.
    Then there exists $\ve = \ve(d,p)>0$ small so that if $\|\phi\|_\Sigma < \ve$, then
    \[
        \|u\|_{\frac{pq}{q-2},p+2} + \|u\|_{q,p+2} \lesssim_{d,p} \|\phi\|_\Sigma^{\frac{2}{p+2}}.
    \]
\end{lem}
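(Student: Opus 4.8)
The plan is to run a standard contraction-mapping / a priori estimate argument on the Duhamel formula $u(t) = e^{it\Delta}\phi \mp i\int_0^t e^{i(t-s)\Delta}F(u(s))~ds$, but with the crucial input being the pseudoconformal decay estimate \eqref{eqn:effective_pcf_small} of Lemma \ref{lem:effective_pseudoconformal}, which is what upgrades the merely local Strichartz control into a global-in-time bound. First I would record the relevant exponent arithmetic: with $q = \frac{4(p+2)}{dp}$ the pair $(q,p+2)$ is admissible, and one checks that the pair appearing in the first norm, $\left(\tfrac{pq}{q-2}, p+2\right)$, is (after taking conjugates) a dual admissible pair — this is precisely the Hölder-in-time pairing that makes the nonlinearity $F(u) = |u|^p u$, which is roughly $|u|^{p+1}$ in size, land in the dual Strichartz space when $u \in L_t^q L_x^{p+2}$. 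Indeed, by Hölder in time, $\|F(u)\|_{L_t^{\alpha}L_x^{(p+2)'}} \lesssim \|u\|_{L_t^{\theta}L_x^{p+2}}^{p}\|u\|_{L_t^q L_x^{p+2}}$ for the appropriate $\theta$, so the scheme closes if one can bound the $L_t^\theta L_x^{p+2}$ norm of $u$.

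Here is where the decay estimate enters. On the region $t \geq 1$, estimate \eqref{eqn:effective_pcf_small} gives $\|u(t)\|_{L_x^{p+2}}^{p+2} \lesssim \langle t\rangle^{-dp/2}\|\phi\|_\Sigma^2$, hence $\|u(t)\|_{L_x^{p+2}} \lesssim \langle t\rangle^{-dp/(2(p+2))}\|\phi\|_\Sigma^{2/(p+2)}$; this is integrable to the power $\theta$ precisely when $\theta \cdot \frac{dp}{2(p+2)} > 1$, and the condition $p > \alpha(d)$ is exactly what guarantees that the relevant $\theta$ clears this threshold (this is the role the Strauss exponent plays, as noted in the footnote to Theorem \ref{thm:Sigma_scattering}). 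On the compact region $t \in [0,1]$ one does not need decay at all: local Strichartz theory in $\Sigma$, together with smallness of $\|\phi\|_\Sigma$, controls $\|u\|_{L_t^\theta L_x^{p+2}([0,1])}$ and $\|u\|_{L_t^q L_x^{p+2}([0,1])}$ by $\|\phi\|_\Sigma^{2/(p+2)}$ — the power $2/(p+2)$ rather than $1$ comes from the fact that the natural quantity controlled by $\Sigma$ data on bounded time intervals is really $\|u(t)\|_{L_x^{p+2}}^{p+2} \lesssim \|\phi\|_\Sigma^2$, via Sobolev embedding $\Sigma \hookrightarrow H^1 \hookrightarrow L^{p+2}$ and persistence of the $\Sigma$-norm under the flow. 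Combining the two regions and feeding the resulting bound back through the Duhamel integral with the Strichartz inequality of Proposition \ref{prop:Strichartz}, the smallness of $\|\phi\|_\Sigma$ absorbs the nonlinear term and closes the estimate, yielding the stated bound with implicit constant depending only on $d,p$.

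The main obstacle, and the step requiring the most care, is bookkeeping the time-exponents so that the decay rate from \eqref{eqn:effective_pcf_small} is genuinely integrable in the Hölder-in-time pairing — i.e. verifying that $\alpha(d) < p$ is both necessary and sufficient for $\theta \cdot \frac{dp}{2(p+2)} > 1$ with $\theta$ the conjugate time-exponent dictated by admissibility. A secondary technical point is justifying that the formal contraction argument actually converges for the equation posed on all of $[0,\infty)$ rather than just producing an a priori bound: one sets up the map $\Phi(u) = e^{it\Delta}\phi \mp i\int_0^t e^{i(t-s)\Delta}F(u)\,ds$ on the ball $\{\|u\|_{q,p+2} + \|u\|_{pq/(q-2), p+2} \leq C\|\phi\|_\Sigma^{2/(p+2)}\}$, checks it is a contraction there using the difference estimate $\||u|^pu - |v|^pv| \lesssim (|u|^p + |v|^p)|u-v|$ together with the same Hölder/Strichartz chain, and invokes uniqueness of the $\Sigma$-flow to identify the fixed point with $u$. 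Modulo these exponent verifications, everything is routine.
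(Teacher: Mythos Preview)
Your approach is correct in outline but substantially more elaborate than what is actually needed; the paper's proof is a two-line direct integration with no Duhamel formula, no Strichartz estimate, and no contraction mapping. The point is that the small-data pseudoconformal estimate \eqref{eqn:effective_pcf_small} already holds for \emph{all} $t\geq 0$ (with $\langle t\rangle$ in place of $t$), giving the pointwise bound $\|u(t)\|_{L_x^{p+2}} \lesssim \langle t\rangle^{-\frac{dp}{2(p+2)}}\|\phi\|_\Sigma^{2/(p+2)}$. One then simply raises this to the appropriate power and integrates in $t$: for the first norm the resulting integrand is $\langle t\rangle^{-\frac{2p}{q-2}}$, which is integrable exactly when $p>\alpha(d)$; for the second norm the integrand is $\langle t\rangle^{-2}$, which is always integrable. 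That is the entire argument.

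By contrast, you split $[0,1]$ and $[1,\infty)$ (unnecessary, since \eqref{eqn:effective_pcf_small} covers both), use the decay only for the $L_t^{pq/(q-2)}L_x^{p+2}$ norm, and then run Duhamel plus Strichartz with absorption to recover the admissible $L_t^qL_x^{p+2}$ norm. This does close, and the contraction step at the end is indeed superfluous since $u$ already exists as the unique $\Sigma$-solution, so what you really have is an a priori estimate. Your route has the mild advantage that the Strichartz step yields $\|u\|_{q,p+2}\lesssim\|\phi\|_{L^2}$ rather than $\|\phi\|_\Sigma^{2/(p+2)}$, but in the small-data regime this gain is irrelevant. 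The main thing you are missing is the observation that \eqref{eqn:effective_pcf_small} alone already controls \emph{both} mixed norms directly, so the Duhamel/Strichartz machinery is doing no work here.
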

\begin{proof}
    By Lemma \ref{lem:effective_pseudoconformal},
    \begin{align*}
        \|u\|_{\frac{pq}{q-2},p+2} 
            &\lesssim \left( \int_0^\infty (\langle t\rangle^{-\frac{dp}{2(p+2)}} \|\phi\|_\Sigma^{\frac{2}{p+2}})^{\frac{pq}{q-2}} ~dt \right)^{\frac{q-2}{pq}}\\
            &= \|\phi\|_\Sigma^{\frac{2}{p+2}} \left( \int_0^\infty \langle t\rangle^{-\frac{2p}{q-2}}~dt \right)^{\frac{q-2}{pq}}.
    \end{align*}
    The integral in time is finite provided $\frac{2p}{q-2} > 1$, which is true whenever $p > \alpha(d)$.
    A similar argument shows that
    \[
        \|u\|_{q,p+2} \lesssim \|\phi\|_\Sigma^{\frac{2}{p+2}} \left( \int_0^\infty \langle t\rangle^{-2}~dt\right)^{\frac{1}{q}} \lesssim \|\phi\|_\Sigma^{\frac{2}{p+2}}.
    \]
\end{proof}
\begin{lem}\label{lem:global_linear_estimate}
    Let $d\geq 1$, $\alpha(d) < p < \frac{4}{d}$, and $\phi\in\Sigma$.
    Then
    \[
        \|e^{it\Delta}\phi\|_{\frac{pq}{q-2},p+2} \lesssim_{d,p} \|\phi\|_\Sigma.
    \]
\end{lem}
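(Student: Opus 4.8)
The plan is to estimate $\|e^{it\Delta}\phi\|_{\frac{pq}{q-2},p+2}$ directly, reducing it to the pseudoconformal energy estimate of Lemma \ref{lem:effective_pseudoconformal} applied to the \emph{linear} flow, i.e., to $\phi = u$ with $F \equiv 0$ — or, more cleanly, to observe that the linear flow $e^{it\Delta}\phi$ is itself the global solution to \eqref{eqn:NLS_Cauchy} with $F$ replaced by $0$, so the proof of Lemma \ref{lem:effective_pseudoconformal} applies verbatim (in fact it simplifies, since all the terms involving $|u|^{p+2}$ on the nonlinear side vanish). This yields the decay bound $\|e^{it\Delta}\phi\|_{L_x^{p+2}}^{p+2} \lesssim_{d,p} \langle t\rangle^{-\frac{dp}{2}}\|\phi\|_\Sigma^2$ for all $t \geq 0$, with no smallness assumption needed since there is no nonlinear term to absorb.

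Granting this, the argument mirrors the proof of Lemma \ref{lem:global_nonlinear_estimate}. I would write
\[
    \|e^{it\Delta}\phi\|_{\frac{pq}{q-2},p+2}
        = \left( \int_0^\infty \|e^{it\Delta}\phi\|_{L_x^{p+2}}^{\frac{pq}{q-2}}~dt \right)^{\frac{q-2}{pq}}
        \lesssim \left( \int_0^\infty \left(\langle t\rangle^{-\frac{dp}{2(p+2)}}\|\phi\|_\Sigma^{\frac{2}{p+2}}\right)^{\frac{pq}{q-2}}~dt \right)^{\frac{q-2}{pq}},
\]
and then factor out the power of $\|\phi\|_\Sigma$. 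The exponent bookkeeping gives $\frac{2}{p+2} \cdot \frac{pq}{q-2} \cdot \frac{q-2}{pq} = \frac{2}{p+2}$ for the norm factor — wait, this would give $\|\phi\|_\Sigma^{\frac{2}{p+2}}$, not $\|\phi\|_\Sigma$. The resolution is that one should \emph{not} use the small-data estimate \eqref{eqn:effective_pcf_small} here but rather interpolate: for the linear flow one has the trivial bound $\|e^{it\Delta}\phi\|_{L_x^{p+2}} \lesssim \|\phi\|_{L^{(p+2)'}} \lesssim \|\phi\|_\Sigma$ for bounded $t$ from the dispersive estimate (Proposition \ref{prop:dispersive_estimate}) combined with Sobolev embedding $\Sigma \hookrightarrow L^{(p+2)'}$, and the decay estimate $\|e^{it\Delta}\phi\|_{L_x^{p+2}} \lesssim |t|^{-(\frac d2 - \frac{d}{p+2})}\|\phi\|_{L^{(p+2)'}}$ for large $t$; splitting the time integral at $t = 1$ and matching the decay rate $\frac{d}{2} - \frac{d}{p+2} = \frac{dp}{2(p+2)}$ against the integrability threshold $\frac{pq}{q-2}\cdot\frac{dp}{2(p+2)} > 1$ (which is exactly $p > \alpha(d)$, as in Lemma \ref{lem:global_nonlinear_estimate}) gives a bound with a single power $\|\phi\|_\Sigma$ throughout, since the dispersive estimate is itself linear in $\phi$.

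The main obstacle — really the only subtlety — is the exponent arithmetic: one must confirm that $\frac{pq}{q-2}\cdot\left(\frac d2 - \frac{d}{p+2}\right) > 1$ is equivalent to $p > \alpha(d)$, using $q = \frac{4(p+2)}{dp}$, so that $q - 2 = \frac{4(p+2) - 2dp}{dp} = \frac{2(2p+4-dp)}{dp}$ and hence $\frac{pq}{q-2} = \frac{p \cdot \frac{4(p+2)}{dp}}{\frac{2(2p+4-dp)}{dp}} = \frac{2(p+2)}{2p+4-dp}$; multiplying by $\frac{dp}{2(p+2)}$ gives the condition $\frac{dp}{2p+4-dp} > 1$, i.e., $dp > 2p + 4 - dp$, i.e., $2dp - 2p > 4$, i.e., $p(d-1) > 2$ — which should be double-checked against the definition of $\alpha(d)$, since the same threshold appeared in Lemma \ref{lem:global_nonlinear_estimate} via $\frac{2p}{q-2} > 1$ and consistency with that computation is the real content. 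Everything else is an application of the dispersive estimate, Sobolev embedding $\Sigma \hookrightarrow L^r$ for $2 \leq r \leq \frac{2d}{d-2}$ (which contains $(p+2)'$ in the relevant range), and Minkowski/H\"older to split the time integral; no smallness of $\phi$ is needed because the linear flow carries no feedback.
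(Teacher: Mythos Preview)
Your second approach---dispersive decay for large $t$, a uniform bound for small $t$, then integrate in time---is exactly the paper's, but two steps fail as written. First, the dispersive estimate $\|e^{it\Delta}\phi\|_{p+2}\lesssim |t|^{-\frac{dp}{2(p+2)}}\|\phi\|_{(p+2)'}$ blows up as $t\to 0$, so it does not furnish a ``trivial bound'' on $[0,1]$. The paper handles small $t$ with Gagliardo-Nirenberg and conservation of $\|\cdot\|_2$ and $\|\nabla\cdot\|_2$ under the linear flow: $\|e^{it\Delta}\phi\|_{p+2}\lesssim\|\phi\|_2^\theta\|\nabla\phi\|_2^{1-\theta}\lesssim\|\phi\|_\Sigma$ uniformly in $t$, and combines this with the dispersive estimate for $t\geq 1$ to obtain $\|e^{it\Delta}\phi\|_{p+2}\lesssim\langle t\rangle^{-\frac{dp}{2(p+2)}}\|\phi\|_\Sigma$. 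Second, the embedding $\Sigma\hookrightarrow L^{(p+2)'}$ is not covered by the $H^1$ Sobolev range $2\leq r\leq\frac{2d}{d-2}$ you cite, since $(p+2)'=\frac{p+2}{p+1}<2$; it comes instead from the weight (if $f,xf\in L^2$ then $f\in L^r$ for $\frac{2d}{d+2}<r\leq 2$, by H\"older against $\langle x\rangle^{-1}$), which is precisely the range the paper invokes.

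There is also an arithmetic slip in your integrability check: $\frac{pq}{q-2}=\frac{2p(p+2)}{2p+4-dp}$, not $\frac{2(p+2)}{2p+4-dp}$ (a factor of $p$ was dropped from the numerator). With the correct expression the condition $\frac{pq}{q-2}\cdot\frac{dp}{2(p+2)}>1$ reads $dp^2+(d-2)p-4>0$, which is exactly $p>\alpha(d)$; your derived condition $p(d-1)>2$ is vacuously false for $d=1$, which should have been a red flag.
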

\begin{proof}
    The dispersive estimate (Proposition \ref{prop:dispersive_estimate}), combined with the Gagliardo-Nirenberg inequality and the embedding $\Sigma\hookrightarrow L^q$ ($\frac{2d}{d+2} < q \leq 2$), gives us the decay estimate
    \[
        \|e^{it\Delta}\phi\|_{p+2} \lesssim \langle t\rangle^{-\frac{dp}{2(p+2)}}\|\phi\|_\Sigma.
    \]
    From here the proof is nearly identical to that of Lemma \ref{lem:global_nonlinear_estimate}, where we invoke the above decay estimate in place of the pseudoconformal energy estimate.
\end{proof}
\begin{proof}[Proof of Proposition \ref{prop:small_data_expansion}]
    By the construction of the wave operator given in \cites{CaWe92}, for a given final state $\phi\in \Sigma$ the global solution $u$ to \eqref{eqn:NLS_Cauchy} with final state $\phi$ satisfies
    \begin{align*}
        u(t) 
            &= e^{it\Delta}\phi + \int_t^\infty e^{i(t-s)\Delta}F(u)(s)~ds\\
            &= e^{it\Delta}\phi + \int_t^\infty e^{i(t-s)\Delta}F(e^{is\Delta}\phi)(s)~ds + r_+(\phi)(t),
    \end{align*}
    where
    \[
        r_+(\phi)(t) = u(t) - e^{it\Delta}\phi - \int_t^\infty e^{i(t-s)\Delta}F(e^{is\Delta}\phi)(s)~ds.
    \]
    Sending $t\to 0$, we obtain
    \[
        \mcal{W}(\phi) = u(0) = \phi + \int_0^\infty e^{-is\Delta}F(e^{is\Delta}\phi)(s)~ds + e_+(\phi)
    \]
    where $e_+(\phi) = r_+(\phi)(0)$.
    A similar expression holds for $\mcal{S}(\phi)$: we have
    \begin{equation}\label{eqn:time_dependent_expansion}
        \mcal{S}(\phi) = [\mcal{S}(\phi) - e^{-it\Delta}u(t)] + \left[ \phi - i\int_0^t e^{-is\Delta}F(e^{is\Delta}\phi)~ds\right] + e^{-it\Delta}r_-(\phi)(t),
    \end{equation}
    where
    \[
        r_-(\phi)(t) = u(t) - e^{it\Delta}\phi + i\int_0^t e^{i(t-s)\Delta}F(e^{is\Delta}\phi)~ds.    
    \]
    By the definition of $\mcal{S}(\phi)$ and Theorem \ref{thm:Sigma_scattering}, we have $\|\mcal{S}(\phi) - e^{-it\Delta}u(\phi)(t)\|_2 \to 0$ as $t\to\infty$.
    Sending $t\to\infty$ in \eqref{eqn:time_dependent_expansion}, we obtain
    \[
        \mcal{S}(\phi) = \phi - i\int_0^\infty e^{-is\Delta}F(e^{is\Delta}\phi)~ds  + e_-(\phi),
    \]
    where $e_-(\phi) = \lim_{t\to\infty} e^{-it\Delta}r_-(\phi)(t)$.
    Therefore we have
    \[
        \|e_\pm(\phi)\|_2 \leq \|r_\pm(\phi)\|_{\infty,2}.    
    \]
    Since $u$ satisfies the integral equation, we may write
    \[
        r_\pm(\phi)(t) = \pm i\int_0^t e^{i(t-s)\Delta} [F(u(\phi)(s)) - F(e^{is\Delta}\phi)]~ds.    
    \]
    By repeated applications of the Strichartz inequality (Proposition \ref{prop:Strichartz}) and H\"older, we obtain
    \begin{align*}
        \|r_\pm(\phi)\|_{\infty,2}
            &= \left\|\int_0^t e^{i(t-s)\Delta} [F(u(\phi)(s)) - F(e^{is\Delta}\phi)]~ds \right\|_{\infty,2}\\
            &\lesssim_{d,p} (\|u\|_{\frac{pq}{q-2},p+2} + \|e^{it\Delta}\phi\|_{\frac{pq}{q-2},p+2})\|u - e^{it\Delta}\phi\|_{q,p+2}\\
            &\lesssim_{d,p} (\|u\|_{\frac{pq}{q-2},p+2}^p + \|e^{it\Delta}\phi\|_{\frac{pq}{q-2},p+2}^p)\|u\|_{\frac{pq}{q-2},p+2}^p\|u\|_{q,p+2}.
    \end{align*}
    Using Lemmas \ref{lem:global_nonlinear_estimate} and \ref{lem:global_linear_estimate} to control the terms in the last line, and noting that $\|\phi\|_\Sigma^{\frac{2}{p+2}} \gg \|\phi\|_\Sigma$ for $\|\phi\|_\Sigma$ small, we obtain Proposition \ref{prop:small_data_expansion}.
\end{proof}

\section{Proof of Theorem \ref{thm:main} for $d\geq 4$}\label{sec:illposedness}

We begin the proof of Theorem \ref{thm:main}.
In the interest of exposition, from here on we restrict to the case $d\geq 4$; this simplifies some technical details, while preserving the essence of the proof.
We refer the reader to the appendix for the modifications needed to recover $d=1,2,3$.

Our first goal is part (1) of Theorem \ref{thm:main}.
First we show that $\mcal{T}_\pm:\Sigma\to L^2$ is of class $C^s(0)$ for all $0<s \leq 1+p$.
Applying Strichartz and arguing as in the proof of Lemma \ref{lem:global_linear_estimate}, we have the estimate
\[
    \left\| \int_0^\infty e^{-is\Delta}F(e^{is\Delta}\phi)~ds\right\|_2 \lesssim \|\phi\|_\Sigma^{1+p}.    
\]
Therefore Proposition \ref{prop:small_data_expansion} gives us
\[
    \mcal{T}_\pm(\phi) - \phi = \mcal{O}_{L^2}(\|\phi\|_\Sigma^{1+p}) + \mcal{O}_{L^2}(\|\phi\|_\Sigma^{\frac{2(2p+1)}{p+2}}) 
\]
whenever $\|\phi\|_\Sigma$ is small.
Noting that $\frac{2(2p+1)}{p+2} > 1+p$ (the condition is equivalent to $p<1$, which holds for mass-subcritical NLS whenever $d\geq 4$), we find that
\[
    \mcal{T}_\pm(\phi) - \phi = \mcal{O}_{L^2}(\|\phi\|_\Sigma^{1+p}).
\]
From this we conclude that $\mcal{T}_\pm:\Sigma\to L^2$ belongs to the class $C^s(0)$ for all $0<s\leq 1+p$.
Moreover, this identifies the first variation of $\mcal{T}_\pm$ at $0$ as $d\mcal{T}_\pm(0)(\phi) = \phi$.

Next we show that $\mcal{T}_\pm:\Sigma\to L^2$ fails to be of class $C^s(0)$ whenever $s>1+p$.
It suffices to show that
\begin{equation}\label{eqn:Sigma_breakdown}
    \mcal{T}_\pm(\phi) - \phi \neq \mcal{O}_{L^2}(\|\phi\|_\Sigma^s)
\end{equation}
as $\|\phi\|_\Sigma\to 0$ for any $s>1+p$; see Lemma \ref{lem:notin_Cs}.

By Proposition \ref{prop:small_data_expansion}, $L^2$ duality, and the unitarity of the linear propagator, we have 
\begin{align*}
    \|\mcal{T}_\pm(\phi) - \phi\|_2
        &\geq \left\|\int_0^\infty e^{-is\Delta}F(e^{is\Delta}\phi)~ds\right\|_2 - \|e_\pm(\phi)\|_2\\
        &\geq \frac{1}{\|\phi\|_2}\left|\int_0^\infty \langle e^{-is\Delta}F(e^{is\Delta}\phi), \phi\rangle_{L_x^2}~ds\right| - \|e_\pm(\phi)\|_2\\
        &= \frac{\|e^{it\Delta}\phi\|_{p+2,p+2}^{p+2}}{\|\phi\|_2} - \|e_\pm(\phi)\|_2.
\end{align*}
Therefore \eqref{eqn:Sigma_breakdown} is proved if we exhibit a sequence $(\phi_n)\subset\Sigma$ with $\|\phi_n\|_\Sigma\to 0$ and
\[
    \frac{\|e^{it\Delta}\phi_n\|_{p+2,p+2}^{p+2}}{\|\phi_n\|_2\|\phi_n\|_\Sigma^s} - \frac{\|e_\pm(\phi_n)\|_\Sigma}{\|\phi_n\|_\Sigma^s} \to \infty.  
\]
Let $\phi\in\Sigma$ with $\|\phi\|_2 = 1$, and for $\ve,\sigma>0$ define
\[
    \phi_{\ve,\sigma}(x) = \frac{\ve}{\sigma^{\frac{d}{2}}}\phi\left(\frac{x}{\sigma}\right).
\]
Then $\phi_{\ve,\sigma}$ satisfies the following scalings:
\[
    \|\phi_{\ve,\sigma}\|_2 = \ve, ~\|\nabla\phi_{\ve,\sigma}\|_2 \sim \frac{\ve}{\sigma}, ~\|\phi_{\ve,\sigma}\|_\Sigma \sim \ve(1+\frac{1}{\sigma} + \sigma),
\]
and
\[
    \|e^{it\Delta}\phi_{\ve,\sigma}\|_{p+2,p+2}^{p+2}
        \sim \ve^{p+2}\sigma^{2-\frac{dp}{2}},   
\]
where for the last expression we have used the parabolic scaling symmetry of the linear Schr\"odinger equation.

We will work in the regime $\ve\ll 1$, $\sigma\gg 1$, and $\ve\sigma\ll 1$.
These together imply that $\|\phi_{\ve,\sigma}\|_\Sigma \sim \ve\sigma\ll 1$, and therefore we are in the small-data regime of Proposition \ref{prop:small_data_expansion}.

By the error estimate \eqref{eqn:error_estimate} of Proposition \ref{prop:small_data_expansion}, we have
\[
    \|e_\pm(\phi_{\ve,\sigma})\|_2 \lesssim  (\ve\sigma)^{\frac{2(2p+1)}{p+2}}.
\]

Next we assume that $\ve = \sigma^{-j}$ with $j>1$.
Since $\sigma\gg 1$, under this assumption that we still have $\ve\sigma\ll 1$.
We now compute:
\begin{align*}
    \frac{\|e^{it\Delta}\phi_{\ve,\sigma}\|_{p+2,p+2}^{p+2}}{\|\phi_{\ve,\sigma}\|_2} - \|e_\pm(\phi_{\ve,\sigma})\|_2
        &\gtrsim \ve^{p+1}\sigma^{2-\frac{dp}{2}} - (\ve\sigma)^{\frac{2(2p+1)}{p+2}}\\
        &= \sigma^{-j(p+1) + 2 - \frac{dp}{2}} - \sigma^{\frac{2(2p+1)}{p+2}(1-j)}.
\end{align*}
We wish for the main term to dominate the error term in the regime $\sigma\gg 1$.
Since we are free to take $j$ arbitrarily large, the main term will dominate provided $p+1 < \frac{2(2p+1)}{p+2}$; as we have already observed, this is automatically satisfied whenever $d\geq 4$.

Therefore we have
\begin{equation}\label{eqn:breakdown_midpoint}
    \|\mcal{T}_\pm(\phi_{\ve,\sigma}) - \phi_{\ve,\sigma}\|_2
        \gtrsim \sigma^{-j(p+1) + 2 - \frac{dp}{2}}
\end{equation}
and thus
\begin{align*}
    \frac{\|\mcal{T}_\pm(\phi_{\ve,\sigma}) - \phi_{\ve,\sigma}\|_2}{\|\phi_{\ve,\sigma}\|_\Sigma^s}
        \gtrsim \sigma^{j[s-(p+1)] + 2 - \frac{dp}{2}-s}.
\end{align*}
Since $s > 1+p$, for $j$ sufficiently large we have $j[s-(p+1)] + 2 - \frac{dp}{2}-s > 0$.
Taking $j$ large to guarantee this inequality and that the main term dominates the error, then taking $\sigma\to\infty$, we find that
\[
    \frac{\|\mcal{T}_\pm(\phi_{\ve,\sigma}) - \phi_{\ve,\sigma}\|_2}{\|\phi_{\ve,\sigma}\|_\Sigma^s} \to \infty.
\]
We thus conclude, as desired, that $\mcal{T}_\pm$ is not of class $C^s(0)$ as a map $\Sigma\to L^2$ whenever $s > 1+p$.

We proceed to part (2) of Theorem \ref{thm:main}.
It suffices to show there exists $0 < \beta < p$ so that
\begin{equation}\label{eqn:L2_breakdown}
    \mcal{T}_\pm(\phi) - \phi \neq \mcal{O}_{L^2}(\|\phi\|_2^{1+\beta})
\end{equation}
as $\|\phi\|_{L^2}\to 0$.
For suppose $\mcal{T}_\pm\in C^{1+\beta}(0)$.
Then $\mcal{T}_\pm(\ve\phi) - \ve a(\phi) = \mcal{O}_{L^2}(\ve^{1+\beta})$ for $\|\phi\|_2 = 1$ and $\ve>0$ small.
Dividing through by $\ve$, noting $\mcal{T}_\pm(0) = 0$, and letting $\ve\to 0$, we find that $a(\phi) = d\mcal{T}_\pm(0)(\phi)$, the first variation of $\mcal{T}_\pm$ at $0$ in the direction $\phi$; but we already know that $d\mcal{T}_\pm(0)(\phi) = \phi$ when $\mcal{T}_\pm$ is regarded as a map $\Sigma\to L^2$, and by density this would be preserved if $\mcal{T}_\pm$ were to admit an extension to $L^2$.
Therefore $\mcal{T}_\pm(\phi) - \phi$ is the only expression that has any hope of satisfying the $\mcal{O}(\|\phi\|_2^{1+\beta})$ bound; showing that this fails proves that $\mcal{T}_\pm \notin C^{1+\beta}(0)$ as a map $L^2\to L^2$.

From here the proof is similar to the proof we gave for \eqref{eqn:Sigma_breakdown}.
Arguing identically as before, \eqref{eqn:L2_breakdown} is proved if we exhibit a sequence $(\phi_n)\subset\Sigma$ with $\|\phi_n\|_\Sigma\to 0$ and
\[
    \frac{\|e^{it\Delta}\phi_n\|_{p+2,p+2}^{p+2}}{\|\phi_n\|_2^{2+\beta}} - \frac{\|e_\pm(\phi_n)\|_2}{\|\phi_n\|^{1+\beta}} \to \infty.  
\]
We take $\sigma\gg 1$, $\ve = \sigma^{-j}$, and $j$ sufficiently large.
Starting from \eqref{eqn:breakdown_midpoint} and dividing through by $\|\phi_{\ve,\sigma}\|_2^{1+\beta}$, we obtain
\begin{align*}
    \frac{\|\mcal{T}_\pm(\phi_{\ve,\sigma}) - \phi_{\ve,\sigma}\|_2}{\|\phi_{\ve,\sigma}\|_2^{1+\beta}}
        \gtrsim \sigma^{j(\beta - p) + 2 - \frac{dp}{2}}
\end{align*}
For this to be large in the regime $\sigma\gg 1$, we require $j(\beta-p) + 2 - \frac{dp}{2} > 0$, or equivalently $\beta > p - \frac{1}{j}(2-\frac{dp}{2})$.
This shows that if $j$ is sufficiently large and this inequality for $\beta$ holds, then $\mcal{T}_\pm$ fails to extend to a map $L^2\to L^2$ of class $C^{1+\beta}(0)$.
Since the constraint on $\beta$ is an open condition, we can optimize by taking the smallest admissible value of $j$, which depends only on $p$ and $d$.
Therefore we have found $j = j(d,p)$ so that if $\beta > p - \frac{1}{j}(2-\frac{dp}{2})$, then $u_+$ fails to extend to a map $L^2\to L^2$ of class $C^{1+\beta}(0)$, which completes the proof of Theorem \ref{thm:main} when $d\geq 4$.
Corollary \ref{cor:main} now follows from Lemma \ref{lem:Cs_necessary}.

\appendix

\section{Proof of Theorem \ref{thm:main} in $d=1,2,3$}\label{app:dim_1_proof}

Here we outline the proof of Theorem \ref{thm:main} in $d=1,2,3$.
There is no truly serious obstruction to be overcome to obtain the result in low dimensions; the choice to break up the proof is entirely for expository purposes, as the proof for $d\geq 4$ is particularly clean and encompasses all of the main ideas.

The main reason why the previous proof does not extend to lower dimensions is that the error estimate
\[
    \|e_\pm(\phi)\|_2 \lesssim \|\phi\|_\Sigma^{\frac{2(2p+1)}{p+2}}    
\]
is no longer strong enough for the main term to dominate the error when $d\leq 3$ and $\alpha(d) < p < \frac{4}{d}$.
The main task is therefore to sharpen this estimate until the error is once again dominated by the main term.

The inefficiency in the above estimate arises from the use of the pseudoconformal energy estimate (Lemma \ref{lem:effective_pseudoconformal}), which is obviously not scaling-invariant and thus leads to losses every time it is invoked.
At this time we do not have another decay-in-time estimate that can replace the pseudoconformal energy estimate, and so we must still take on some losses.
However, noting that there is some slack in the integrability conditions for the time integrals in the proofs of Lemmas \ref{lem:global_nonlinear_estimate} and \ref{lem:global_nonlinear_estimate}, we can at least reduce the total degree to which we do invoke the pseudoconformal energy estimate.

As before, we write $q = \frac{4(p+2)}{dp}$, so that $(q,p+2)$ is an admissible pair.
We now state the sharpened version of \eqref{eqn:error_estimate}:
\begin{prop}\label{prop:error_bound_sharpened}
    Let $d\geq 1$, $\alpha(d) < p < \frac{4}{d}$.
    Define $e_\pm(\phi)$ as before.
    Let $\frac{q-2}{2p} < \eta \leq 1$ and $\frac{1}{2} < \nu \leq 1$.
    Then there exists $\ve = \ve(d,p)>0$ small so that if $\|\phi\|_\Sigma < \ve$, then
    \begin{equation}\label{eqn:Sigma_error_sharpened}
        \|e_\pm(\phi)\|_2 \lesssim_{d,p,\eta,\nu} \|\phi\|_\Sigma^{ Q(d,p,\eta,\nu)}
    \end{equation}
    where
    \[
        Q(d,p,\eta,\nu) = 2p(1-\eta) + (1-\nu) + \frac{2}{p+2}(2\eta p + \nu).
    \]
\end{prop}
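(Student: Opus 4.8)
The plan is to revisit the proof of Proposition \ref{prop:small_data_expansion}, keeping track of the exponents more carefully, and to insert the two extra parameters $\eta,\nu$ precisely at the two points where the pseudoconformal energy estimate is invoked. Recall that the error is bounded as $\|e_\pm(\phi)\|_2 \leq \|r_\pm(\phi)\|_{\infty,2}$, and that
\[
    \|r_\pm(\phi)\|_{\infty,2} \lesssim_{d,p} (\|u\|_{\frac{pq}{q-2},p+2}^p + \|e^{it\Delta}\phi\|_{\frac{pq}{q-2},p+2}^p)\|u\|_{\frac{pq}{q-2},p+2}^p\|u\|_{q,p+2}
\]
after applying Strichartz and H\"older. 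The point is that in Lemmas \ref{lem:global_nonlinear_estimate} and \ref{lem:global_linear_estimate} we currently bound $\|u\|_{p+2} \lesssim \langle t\rangle^{-\frac{dp}{2(p+2)}}\|\phi\|_\Sigma^{\frac{2}{p+2}}$, i.e. we give the decaying weight the full power $\frac{dp}{2(p+2)}$ and the amplitude the full power $\frac{2}{p+2}$. But the time integrals defining the $L^{pq/(q-2)}_t$ and $L^q_t$ norms have slack: the integral $\int_0^\infty \langle t\rangle^{-2p/(q-2)}\,dt$ converges with room to spare for $p>\alpha(d)$, and $\int_0^\infty\langle t\rangle^{-2}\,dt$ is similarly not borderline.

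So I would interpolate: write $\|u(t)\|_{p+2}^{p+2} = (\|u(t)\|_{p+2}^{p+2})^{\eta}(\|u(t)\|_{p+2}^{p+2})^{1-\eta}$ and bound the first factor by the pseudoconformal estimate \eqref{eqn:effective_pcf_small} (picking up the decaying weight $\langle t\rangle^{-dp\eta/2}$ and the amplitude $\|\phi\|_\Sigma^{2\eta}$) and the second factor by the trivial bound $\|u(t)\|_{p+2}^{p+2} \lesssim \|u(t)\|_\Sigma^{p+2}\lesssim\|\phi\|_\Sigma^{p+2}$ (using conservation of mass and energy together with the small-data well-posedness, so that $\|u(t)\|_\Sigma \lesssim \|\phi\|_\Sigma$ uniformly in $t$ in the small-data regime) — this contributes no decay but only costs the amplitude power $(p+2)(1-\eta)$, i.e.\ per unit of the $p+2$ power, $1-\eta$. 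The condition $\eta > \frac{q-2}{2p}$ is exactly what makes $\int_0^\infty \langle t\rangle^{-\frac{dp\eta}{2(p+2)}\cdot\frac{pq}{q-2}}\,dt$ converge (since $\frac{dp}{2(p+2)}\cdot\frac{pq}{q-2} = \frac{2p}{q-2}$, so we need $\eta\cdot\frac{2p}{q-2}>1$). I would do the analogous interpolation for the $\|u\|_{q,p+2}$ factor with the parameter $\nu$, where $\nu>\frac12$ is what keeps $\int_0^\infty\langle t\rangle^{-\frac{dp\nu}{2(p+2)}\cdot q}\,dt = \int_0^\infty\langle t\rangle^{-2\nu}\,dt$ finite. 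The linear factor $\|e^{it\Delta}\phi\|_{\frac{pq}{q-2},p+2}$ can be handled by the same interpolation using the dispersive decay estimate from Lemma \ref{lem:global_linear_estimate}, or simply bounded using Strichartz since $\|e^{it\Delta}\phi\|_{p+2}^{p+2}\lesssim\|\phi\|_\Sigma^{p+2}$ gives no decay but the Strichartz norm $\|e^{it\Delta}\phi\|_{q,p+2}\lesssim\|\phi\|_2$ is finite; in any case one arranges for the linear piece to contribute a power of $\|\phi\|_\Sigma$ at least as large as that coming from the $u$ pieces, so it is harmless.

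Assembling: the $\|u\|_{\frac{pq}{q-2},p+2}^p$ factors each carry $\|\phi\|_\Sigma$ to the power $p\cdot\frac{1}{p+2}(2\eta p\cdot\frac{1}{p} \cdots)$ — more cleanly, $\|u\|_{\frac{pq}{q-2},p+2}^{p+2}$ contributes amplitude power $2\eta + (p+2)(1-\eta)\cdot\frac{?}{}$... the bookkeeping is: from the $L^{pq/(q-2)}_t L^{p+2}_x$ norm to the power $p$, together with the $L^q_tL^{p+2}_x$ norm, we get total degree in $\phi$ equal to $p(1-\eta) + p(1-\eta) + (1-\nu)$ from the non-decaying ("energy") parts plus $\frac{2}{p+2}(2\eta p + \nu)$... — I would just carefully carry out this elementary exponent count and verify it collapses to $Q(d,p,\eta,\nu) = 2p(1-\eta) + (1-\nu) + \frac{2}{p+2}(2\eta p + \nu)$; note that at $\eta=\nu=1$ this gives $\frac{2}{p+2}(2p+1) = \frac{2(2p+1)}{p+2}$, recovering \eqref{eqn:error_estimate}, which is a reassuring consistency check, and that decreasing $\eta$ or $\nu$ increases $Q$ (better decay, via the "energy" slack), which is the whole point. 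Finally, one should check the small-data claim: the bound $\|u(t)\|_\Sigma\lesssim\|\phi\|_\Sigma$ uniformly in $t$ in the small-data regime follows from mass conservation together with energy conservation (controlling $\|\nabla u(t)\|_2$) and the pseudoconformal estimate \eqref{eqn:effective_pcf_small} at $t=0$ combined with the $\Sigma$ local well-posedness as in Lemma \ref{lem:effective_pseudoconformal}; this is the only mildly delicate ingredient, since without it the non-decaying factors would blow up in time. I expect the main obstacle to be purely organizational — making sure every factor of $\|\phi\|_\Sigma$ is counted exactly once and that the convergence conditions $\eta>\frac{q-2}{2p}$, $\nu>\frac12$ line up correctly with the time integrals — rather than any genuine analytic difficulty.
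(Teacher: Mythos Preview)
Your overall strategy is essentially the paper's: split each occurrence of $\|u(t)\|_{L_x^{p+2}}$ into a decaying piece (pseudoconformal) and a non-decaying piece, with the parameters $\eta,\nu$ governing the proportion, and then count exponents. The exponent bookkeeping you sketch does collapse to $Q(d,p,\eta,\nu)$, and the consistency check at $\eta=\nu=1$ is correct.

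There is, however, a genuine error in your justification of the non-decaying bound. You write $\|u(t)\|_{p+2}^{p+2}\lesssim\|u(t)\|_\Sigma^{p+2}\lesssim\|\phi\|_\Sigma^{p+2}$ and claim that $\|u(t)\|_\Sigma\lesssim\|\phi\|_\Sigma$ uniformly in $t$ in the small-data regime. This is false: the weighted component $\|xu(t)\|_2$ is \emph{not} uniformly bounded in time (already for the free evolution it grows linearly in $t$; what the pseudoconformal identity controls is $\|(x+2it\nabla)u(t)\|_2$, not $\|xu(t)\|_2$). Mass and energy conservation give you uniform control only on $\|u(t)\|_{H^1}$, not on $\|u(t)\|_\Sigma$.

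The fix is immediate and is exactly what the paper does: bound the non-decaying factor by Gagliardo--Nirenberg,
\[
    \|u(t)\|_{L_x^{p+2}} \lesssim \|u(t)\|_2^{\theta}\,\|\nabla u(t)\|_2^{1-\theta}, \qquad \theta = 1 - \tfrac{dp}{2(p+2)},
\]
and then invoke conservation of mass and energy (in the small-data regime $E(\phi)^{1/2}\lesssim\|\phi\|_\Sigma$). This yields $\|u(t)\|_{L_x^{p+2}}\lesssim\|\phi\|_\Sigma$ uniformly in $t$ without ever touching the weighted norm, and the rest of your argument goes through unchanged. The same remark applies to the linear piece: control $\|e^{it\Delta}\phi\|_{L_x^{p+2}}^{1-\eta}$ by Gagliardo--Nirenberg and conservation of $\dot H^s$ norms under the free flow, not via a putative uniform $\Sigma$ bound.
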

We begin the proof.
Write $\theta = 1 - \frac{dp}{2(p+2)}$.
First, we have the following sharpened forms of Lemmas \ref{lem:global_nonlinear_estimate} and \ref{lem:global_linear_estimate}:
\begin{lem}\label{lem:global_nonlinear_estimate_sharpened}
    Let $d\geq 1$, $\alpha(d) < p < \frac{4}{d}$, and $\phi\in\Sigma$.
    Then there exists $\ve = \ve(d,p)>0$ small so that if $\|\phi\|_\Sigma < \ve$, then for $\frac{q-2}{2p} < \eta \leq 1$, we have
    \[
        \|u(\phi)\|_{\frac{pq}{q-2},p+2} \lesssim_{d,p,\eta} (\|\phi\|_2^\theta (E(\phi)^{\frac{1}{2}})^{1-\theta})^{1-\eta}\|\phi\|_\Sigma^{\frac{2\eta}{p+2}},
    \]
    and for $\frac{1}{2} < \nu \leq 1$, we have
    \[
        \|u(\phi)\|_{q,p+2} \lesssim_{d,p,\eta} (\|\phi\|_2^\theta (E(\phi)^{\frac{1}{2}})^{1-\theta})^{1-\nu}\|\phi\|_\Sigma^{\frac{2\nu}{p+2}},
    \]
\end{lem}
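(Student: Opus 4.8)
The plan is to interpolate pointwise in time between two bounds on $\|u(\phi)(t)\|_{L_x^{p+2}}$ and then integrate, exactly as in the proof of Lemma \ref{lem:global_nonlinear_estimate}, but distributing the (non-scale-invariant) loss coming from the pseudoconformal energy estimate more economically. First I would record two elementary pointwise-in-time estimates valid for $\|\phi\|_\Sigma$ small. On one hand, the small-data pseudoconformal energy estimate \eqref{eqn:effective_pcf_small}, after taking $(p+2)$-th roots and using $1-\theta = \frac{dp}{2(p+2)}$, gives
\[
    \|u(\phi)(t)\|_{L_x^{p+2}} \lesssim \langle t\rangle^{-(1-\theta)}\|\phi\|_\Sigma^{\frac{2}{p+2}}.
\]
On the other hand, the Gagliardo--Nirenberg inequality $\|f\|_{L^{p+2}} \lesssim \|f\|_{L^2}^{\theta}\|\nabla f\|_{L^2}^{1-\theta}$ (the interpolation exponent is precisely $1-\theta$, as a scaling computation confirms, and $\theta\in(0,1)$ since $p<\frac4d$), combined with conservation of mass and the bound $\|\nabla u(\phi)(t)\|_{L^2}^2 \le 2E(u(\phi)(t)) = 2E(\phi)$, yields the scale-respecting bound
\[
    \|u(\phi)(t)\|_{L_x^{p+2}} \lesssim \|\phi\|_{L^2}^{\theta}\bigl(E(\phi)^{\frac12}\bigr)^{1-\theta}.
\]

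Next, since $\|u(\phi)(t)\|_{L_x^{p+2}}$ is dominated by each of these two quantities, for $\eta\in[0,1]$ it is dominated by their $\eta/(1-\eta)$ geometric mean:
\[
    \|u(\phi)(t)\|_{L_x^{p+2}} \lesssim \langle t\rangle^{-\eta(1-\theta)}\|\phi\|_\Sigma^{\frac{2\eta}{p+2}}\bigl(\|\phi\|_{L^2}^{\theta}(E(\phi)^{\frac12})^{1-\theta}\bigr)^{1-\eta},
\]
and likewise with $\nu$ in place of $\eta$. Raising the $\eta$-version to the power $\frac{pq}{q-2}$ and integrating in $t$ leaves the integral $\int_0^\infty \langle t\rangle^{-\eta(1-\theta)\frac{pq}{q-2}}\,dt$, which converges precisely when $\eta(1-\theta)\frac{pq}{q-2} > 1$. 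Using the identity $(1-\theta)q = 2$, which holds because $q = \frac{4(p+2)}{dp}$, this condition collapses to $\eta > \frac{q-2}{2p}$ — exactly the stated hypothesis — and taking $\frac{q-2}{pq}$-th roots gives the first estimate. The second estimate is obtained identically: raising the $\nu$-version to the power $q$, the relevant convergence condition is $\nu(1-\theta)q > 1$, i.e. $\nu > \frac12$ after invoking $(1-\theta)q = 2$.

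I expect no genuine obstacle here; the content is essentially bookkeeping of exponents, and taking $\eta=\nu=1$ recovers Lemma \ref{lem:global_nonlinear_estimate}. The one point worth flagging explicitly is that the admissible interval $\bigl(\frac{q-2}{2p},1\bigr]$ for $\eta$ is nonempty precisely because $p>\alpha(d)$: the inequality $\frac{q-2}{2p}<1$ is equivalent to $\frac{2p}{q-2}>1$, which is the same Strauss-exponent condition that makes the time integral in the proof of Lemma \ref{lem:global_nonlinear_estimate} converge. Hence the sharpened estimate is not vacuous, and one is free to push $\eta$ down toward $\frac{q-2}{2p}$ and $\nu$ down toward $\frac12$, which is what will be exploited in the proof of Proposition \ref{prop:error_bound_sharpened}.
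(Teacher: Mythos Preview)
Your proposal is correct and follows essentially the same approach as the paper: factor $\|u(t)\|_{L_x^{p+2}}$ into an $\eta$-weighted piece controlled by the pseudoconformal energy estimate and a $(1-\eta)$-weighted piece controlled by Gagliardo--Nirenberg plus conservation of mass and energy, then integrate in time and read off the convergence condition on $\eta$ (respectively $\nu$). Your explicit use of the identity $(1-\theta)q=2$ to simplify the integrability threshold and your remark on why the admissible range for $\eta$ is nonempty are nice clarifications, but the underlying argument is the paper's.
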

\begin{proof}
    We seek to control
    \[
        \left( \int_0^\infty \|u(\phi)(t)\|_{L_x^{p+2}}^{\frac{pq}{q-2}}~dt \right)^{\frac{q-2}{pq}}.    
    \]
    Let $\eta\in[0,1]$.
    We factor the integrand into powers $\|u(\phi)(t)\|_{L_x^{p+2}}^{\frac{pq}{q-2}(1-\eta)}\|u(\phi)(t)\|_{L_x^{p+2}}^{\frac{pq}{q-2}\eta}$.
    We estimate the first piece using Gagliardo-Nirenberg, and the second using the pseudoconformal energy estimate.
    We obtain
    \begin{align*}
        \left( \int_0^\infty \|u(\phi)(t)\|_{L_x^{p+2}}^{\frac{pq}{q-2}}~dt \right)^{\frac{q-2}{pq}}
            &= \left( \int_0^\infty \|u(\phi)(t)\|_{L_x^{p+2}}^{\frac{pq}{q-2}(1-\eta)}\|u(\phi)(t)\|_{L_x^{p+2}}^{\frac{pq}{q-2}\frac{2\eta}{p+2}}~dt \right)^{\frac{q-2}{pq}}\\
            &\leq (\|\phi\|_2^\theta(E(\phi)^\frac{1}{2})^{1-\theta})^{(1-\eta)}\|\phi\|_\Sigma^{\frac{2\eta}{p+2}}\left( \int_0^\infty \langle t\rangle^{-\frac{2p\eta}{q-2}}~dt \right)^{\frac{q-2}{pq}}.
    \end{align*}
    The last integral is finite assuming $\eta > \frac{q-2}{2p}$.
    This establishes the first estimate in Lemma \ref{lem:global_nonlinear_estimate_sharpened}.
    The second estimate for $\|u(\phi)\|_{q,p+2}$ is proved in exactly the same way: we split $\|u(\phi)(t)\|_{L_x^{p+2}} = \|u(\phi)(t)\|_{L_x^{p+2}}^{(1-\nu)}\|u(\phi)(t)\|_{L_x^{p+2}}^\nu$, estimate the first piece using Gagliardo-Nirenberg, and the second by the pseudoconformal energy estimate.
    The condition $\nu>\frac{1}{2}$ is required to make the final integral in time finite.
    We leave the details to the reader.
\end{proof}
\begin{lem}\label{lem:global_linear_estimate_sharpened}
    Let $d\geq 1$, $\alpha(d) < p < \frac{4}{d}$, and $\phi\in\Sigma$.
    Then for $\frac{q-2}{2p} < \eta \leq 1$,
    \[
        \|e^{it\Delta}\phi\|_{\frac{pq}{q-2},p+2} \lesssim_{d,p,\eta} (\|\phi\|_2^\theta \|\nabla\phi\|_2^{1-\theta})^{1-\eta}\|\phi\|_\Sigma^\eta.
    \]
\end{lem}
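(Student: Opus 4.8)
The plan is to mirror the proof of Lemma \ref{lem:global_nonlinear_estimate_sharpened}, with the fixed-time dispersive decay of the free evolution playing the role that the pseudoconformal energy estimate plays there. The first step is to record the two pointwise-in-time bounds on $\|e^{it\Delta}\phi\|_{L_x^{p+2}}$ that will be interpolated inside the time integral. Writing $s_c = \frac{d}{2} - \frac{d}{p+2} = \frac{dp}{2(p+2)} = 1-\theta$, the Sobolev embedding $\dot H^{s_c}\hookrightarrow L^{p+2}$, the interpolation inequality $\|\psi\|_{\dot H^{s_c}}\lesssim\|\psi\|_2^{1-s_c}\|\nabla\psi\|_2^{s_c}$, and the fact that $e^{it\Delta}$ is an isometry on $\dot H^{s_c}$ together give the uniform (``no-decay'') bound
\[
    \|e^{it\Delta}\phi\|_{p+2}\lesssim \|\phi\|_2^{\theta}\|\nabla\phi\|_2^{1-\theta}.
\]
On the other hand, the dispersive estimate (Proposition \ref{prop:dispersive_estimate}) for $|t|\geq 1$ together with the Sobolev embedding $H^1\hookrightarrow L^{p+2}$ for $|t|\leq 1$ --- exactly as in the proof of Lemma \ref{lem:global_linear_estimate} --- give the decay bound
\[
    \|e^{it\Delta}\phi\|_{p+2}\lesssim \langle t\rangle^{-(1-\theta)}\|\phi\|_\Sigma.
\]
All Sobolev embeddings invoked here hold throughout $\alpha(d)<p<\frac{4}{d}$: for $d\leq 2$ they are unconditional, and for $d\geq 3$ they follow from $p<\frac{4}{d}<\frac{4}{d-2}$.

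The second step is to split the integrand at the exponent $\eta$. With $\rho = \frac{pq}{q-2}$ we write
\[
    \|e^{it\Delta}\phi\|_{\rho,p+2}^{\rho}=\int_0^\infty\|e^{it\Delta}\phi\|_{p+2}^{\rho(1-\eta)}\,\|e^{it\Delta}\phi\|_{p+2}^{\rho\eta}\,dt,
\]
and bounding the first factor by the no-decay estimate and the second by the decay estimate yields
\[
    \|e^{it\Delta}\phi\|_{\rho,p+2}^{\rho}\lesssim(\|\phi\|_2^\theta\|\nabla\phi\|_2^{1-\theta})^{\rho(1-\eta)}\|\phi\|_\Sigma^{\rho\eta}\int_0^\infty\langle t\rangle^{-(1-\theta)\rho\eta}\,dt.
\]
Taking the $\rho^{-1}$-th power gives the claimed inequality as soon as the time integral converges, i.e. $(1-\theta)\rho\eta>1$. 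Using $1-\theta=\frac{2}{q}$ (equivalently the identity $\frac{dp}{2(p+2)}\cdot\frac{4(p+2)}{dp}=2$ for the admissible pair $(q,p+2)$), one has $(1-\theta)\rho = \frac{2}{q}\cdot\frac{pq}{q-2}=\frac{2p}{q-2}$, so the convergence requirement is precisely $\eta>\frac{q-2}{2p}$ --- which is also exactly what makes the hypothesized range $\frac{q-2}{2p}<\eta\leq 1$ nonempty, since $\frac{q-2}{2p}<1$ is equivalent to $p>\alpha(d)$.

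I do not expect a genuine obstacle here: this is the free-evolution twin of Lemma \ref{lem:global_nonlinear_estimate_sharpened}, and the only point requiring care is the exponent bookkeeping that makes the time integral converge exactly at the stated threshold --- guaranteed by the algebraic identity $(1-\theta)q=2$ tying the decay rate of $\|e^{it\Delta}\phi\|_{p+2}$ to the Strichartz pair $(q,p+2)$. (As in the original linear estimate, no smallness of $\phi$ is needed, the estimate being linear in $\phi$; the parameter $\ve$ is irrelevant for this lemma.)
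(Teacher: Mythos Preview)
Your proof is correct and follows essentially the same approach as the paper: factor $\|e^{it\Delta}\phi\|_{p+2}^{\rho}$ into a $(1-\eta)$-part controlled by Gagliardo--Nirenberg and conservation of $\dot H^s$ norms, and an $\eta$-part controlled by the $\langle t\rangle^{-(1-\theta)}\|\phi\|_\Sigma$ decay estimate from Lemma~\ref{lem:global_linear_estimate}, then observe that the resulting time integral converges precisely when $\eta>\frac{q-2}{2p}$. Your exponent bookkeeping via $(1-\theta)q=2$ and the remark that no smallness of $\phi$ is needed are both accurate.
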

\begin{proof}
    The proof proceeds almost identically to that of the first part of Lemma \ref{lem:global_nonlinear_estimate_sharpened}.
    As earlier, we factor $\|e^{it\Delta}\phi\|_{p+2} = \|e^{it\Delta}\phi\|_{p+2}^{1-\eta}\|e^{it\Delta}\phi\|_{p+2}^\eta$.
    The first factor can be controlled using Gagliardo-Nirenberg and the conservation of $\dot{H}^s$ norms under the linear Schr\"odinger flow.
    The second factor is controlled near time $0$ by Gagliardo-Nirenberg, and at large times by the dispersive estimate and the embedding $\Sigma\hookrightarrow L^q$ for all $\frac{2d}{d+2} < q \leq 2$.
    The condition $\eta > \frac{q-2}{2p}$ ensures that the time integral that remains is finite.
    We leave the details to the reader.
\end{proof}
\begin{proof}[Proof of Proposition \ref{prop:error_bound_sharpened}]
    We argue as in the proof the error bound in of Proposition \ref{prop:small_data_expansion}, but using Lemmas \ref{lem:global_nonlinear_estimate_sharpened} and \ref{lem:global_linear_estimate_sharpened}.
    Doing so, we arrive at an estimate of the form
    \[
        \|e_\pm(\phi)\|_2 \lesssim_{d,p,\eta,\nu} \|\phi\|_2^\alpha (E(\phi)^\frac{1}{2})^{\beta+\delta} \|\phi\|_\Sigma^\gamma + \|\phi\|_2^\alpha(E(\phi)^\frac{1}{2})^\beta\|\nabla\phi\|_2^\delta \|\phi\|_\Sigma^{\gamma + \frac{p}{p+2}\eta p},
    \]
    where:
    \begin{align*}
        \alpha &= \theta(2p(1-\eta) + (1-\nu));\\
        \beta &= (1-\theta)(p(1-\eta) + (1-\nu));\\
        \gamma &= \frac{2}{p+2}(2\eta p + \nu);\\
        \delta &= (1-\theta)p(1-\eta).
    \end{align*}
    Note that $Q(d,p,\eta,\nu) = \alpha+\beta+\delta+\gamma$.
    By Sobolev embedding, $E(\phi)$ is controlled by $\|\phi\|_\Sigma^2 + \|\phi\|_\Sigma^{p+2}$, and by the assumption $\|\phi\|_\Sigma\ll 1$ the second term is negligible.
    Therefore every norm and each $(E(\phi))^{\frac{1}{2}}$ is majorized by $\|\phi\|_\Sigma$, and we have:
    \[
        \|e_\pm(\phi)\|_2 
            \lesssim_{d,p,\eta,\nu} \|\phi\|_\Sigma^{Q(d,p,\eta,\nu)} + \|\phi\|_\Sigma^{Q(d,p,\eta,\nu) + \frac{p}{p+2}\eta p}
            \lesssim \|\phi\|_\Sigma^{Q(d,p,\eta,\nu)}. \qedhere
    \]
\end{proof}
We are now ready to prove Theorem \ref{thm:main} in full generality.
\begin{proof}[Proof of Theorem \ref{thm:main}]
    We mention only the necessary changes relative to the proof in dimensions $d\geq 4$.

    The first step is to prove that $\mcal{T}_\pm:\Sigma\to L^2$ is of class $C^s(0)$ for all $0<s<1+p$.
    It suffices as before to show that
    \[
        \mcal{T}_\pm(\phi) - \phi = \mcal{O}(\|\phi\|_\Sigma^{1+p})
    \]
    whenever $\|\phi\|_\Sigma$ is small.
    To ensure this we must show that $e_+(\phi)$ is of higher order in $\|\phi\|_\Sigma$ than the main term, i.e. $1 + p < Q(d,p,\eta,\nu)$ for some admissible choice of $\eta$ and $\nu$.
    Since $\eta$ can be arbitrarily close to $\frac{q-2}{2p}$ and $\nu$ can be arbitrarily close to $\frac{1}{2}$, it suffices to show that
    \[
        1 + p < Q\left(d,p,\frac{q-2}{2p},\frac{1}{2}\right).
    \]
    This is equivalent to the condition
    \[
        2dp^2 + (11d-8)p + (8d-16) > 0.    
    \]
    When $d\geq 2$, this is automatically satisfied for $p>0$ because the coefficients are nonnegative.
    When $d=1$, the positive root of this polynomial is smaller than $\frac{3}{2}$, and thus this is satisfied for $p > 2 = \frac{2}{d}$.

    Next we show that $\mcal{T}_\pm:\Sigma\to L^2$ is not of class $C^s(0)$ for $s > 1+p$, and does not extend to a map $L^2\to L^2$ of class $C^{1+\beta}(0)$ for some $0<\beta<p$.
    As before it suffices to show that
    \begin{equation}\label{eqn:Sigma_illposed_low_dim}
        \mcal{T}_\pm(\phi) - \phi \neq \mcal{O}_{L^2}(\|\phi\|_\Sigma^s)  
    \end{equation}
    in the first case, and
    \begin{equation}\label{eqn:L2_illposed_low_dim}
        \mcal{T}_\pm(\phi) - \phi \neq \mcal{O}_{L^2}(\|\phi\|_{L^2}^{1+\beta}).  
    \end{equation}
    in the latter case.
    Examining the proof in $d\geq 4$, we observe that the only way in which the size of $e_\pm(\phi)$ enters into either argument is to show that there exists a regime $\ve\ll 1$, $\sigma\gg 1$ and $\ve\sigma\ll 1$ so that $\|\phi_{\ve,\sigma}\|_2$ (where $\phi_{\ve,\sigma}$ is defined as before) is dominated by the main term $\|\phi_{\ve,\sigma}\|_2^{-1}\|e^{it\Delta}\phi_{\ve,\sigma}\|_{p+2,p+2}^{p+2}$.
    Taking $\ve = \sigma^{-j}$ with $j>1$ to be determined, the main term is still of size
    \[
        \frac{\|e^{it\Delta}\phi_{\ve,\sigma}\|_{p+2,p+2}^{p+2}}{\|\phi_{\ve,\sigma}\|_2} \sim \sigma^{-j(p+1)+2-\frac{dp}{2}}.    
    \]
    We use \eqref{eqn:Sigma_error_sharpened} to control the error by
    \[
        \|e_\pm(\phi_{\ve,\sigma})\|_2 \lesssim_{d,p,\eta,\nu} \sigma^{(1-j)Q(d,p,\eta,\nu)}.
    \]
    Noting as before that $Q(d,p,\eta,\nu) > p+1$ for a judicious choice of $\eta$ and $\nu$, we see that $\|e_\pm(\phi_{\ve,\sigma})\|_\Sigma$ is negligible relative to the main term for $j$ sufficiently large and $\sigma\gg 1$.
    From here the proof of \eqref{eqn:Sigma_illposed_low_dim} and \eqref{eqn:L2_illposed_low_dim} proceeds exactly as when $d\geq 4$.
\end{proof}

\section{Pointwise H\"older spaces and Gateaux derivatives}\label{app:Holder_spaces}
In this appendix we relate the notion of pointwise H\"older regularity given in Definition \ref{dfn:Holder_space} to more familiar notions.
For convenience we reproduce the definition here:
\begin{dfn}[Pointwise H\"older space \cites{Andersson97}]
    Let $X$ and $Y$ be Banach spaces.
    Let $x_0\in X$ and $U$ a convex open neighborhood of $x_0$.
    Fix $s>0$, and let $n$ be the integer part of $s$.
    For $s>0$, we say that the map $G:X\to Y$ belongs to the \ita{pointwise H\"older space} $C^s(x_0)$ if for all $h\in X$ with $\|h\|_X = 1$, there exist coefficients $\{a_j(x_0; h)\}_{j=0}^n \subset Y$ such that
    \[
        \|G(x_0 + \ve h) - G(x_0) - \sum_{j=1}^n \ve^j a_j(x_0; h)\|_Y \lesssim \ve^s   
    \]
    for all $\ve>0$ sufficiently small, with the implicit constant independent of the direction $h$.
\end{dfn}
This is related to two notions: the Peano derivative (also known as the de la Vall\'ee-Poussin derivative), and the Gateaux derivative.
\begin{dfn}[Peano, de la Vall\'ee-Poussin derivative]
    Let $X$ and $Y$ be Banach spaces.
    Let $x_0\in X$, let $U$ be a convex open neighborhood of $x_0$, and let $h\in X$ with $\|h\|_X = 1$.
    For $n\geq 1$, we say that a map $G:U\to Y$ has an $n$-th \ita{Peano derivative}, or \ita{de la Vall\'ee-Poussin derivative}, at $x_0$ in the direction $h$ if there exist $\{a_j(x_0;h)\}_{j=1}^n \subset Y$ such that
    \[
        \|G(x_0 + \ve h) - G(x_0) - \sum_{j=1}^n \frac{1}{j!}\ve^ja_j(x_0;h)\|_Y = o(\ve^n;h)
    \]
    as $\ve\to 0$.
\end{dfn}
Therefore if $G\in C^s(x_0)$ with $s\geq n$, then $G$ automatically has an $n$-th Peano derivative, with an asymptotic bound as $\ve\to 0$ which is uniform in $h$; moreover, if $s>n$, then the asymptotic bound is stronger.
\begin{dfn}[Gateaux derivative \cites{Graves27}]
    Let $X$ and $Y$ be Banach spaces.
    Let $x_0\in X$ and $U\subset X$ a convex neighborhood of $x_0$.
    We say that the map $G:U\to Y$ is \ita{Gateaux differentiable} at $x_0$ in the direction $h\in X$ if the limit
    \[
        dG(x_0;h) = \lim_{\ve \to 0^+} \frac{G(x_0+\ve h) - G(x_0)}{\ve} = \frac{d}{d\ve}\bigg|_{\ve=0} G(x_0+\ve h)   
    \]
    exists in $Y$.
    In that case, we call $dG(x_0;h)$ the \ita{Gateaux derivative}, or \ita{first variation}, of $G$ at $u$ in the direction $v$.
    If $dG(x_0;h)$ exists for all $h\in X$, we say that $G$ is Gateaux differentiable at $x_0$.
    Similarly, we define the Gateaux derivative of order $n$, or $n$-th variation, by
    \[
        d^nG(x_0;h) = \frac{d^n}{d\ve^n}\bigg|_{\ve=0} G(x_0+\ve h).
    \]
\end{dfn}
Gateaux derivatives are homogeneous in their second argument: $d^jG(x_0;\ve h) = \ve^jd^jG(x_0;h)$ for all $\ve\in\bb{R}$ (\cites{Graves27}, Lemma 1.2).

It is clear that if $n\geq 1$ and $G:U\to Y$ has an $n$-th Peano derivative $a_n(x_0;h)$ at $x_0$ in the direction $h$, then it also has $j$-th Peano derivatives $a_j(x_0;h)$ at $x_0$ in the direction $h$ for $j=1,\ldots,n-1$; moreover, $G$ is Gateaux differentiable at $x_0$ in the direction $h$ with first variation $dG(x_0;h) = a_1(x_0;h)$.
It is not, however, true that $G$ has variations of any higher order, even in the real-valued case: a counterexample is $f(x) = x^3\sin(1/x)$ for $x\neq 0$, $f(0) = 0$, for which the second Peano derivative exists at $0$, but not $f''(0)$ \cites{Oliver54}.
For this reason, $C^s(x_0)$ is not exactly a replacement for the space of $n$-times Gateaux differentiable maps with $d^nG(x_0;h)$ H\"older continuous of order $s-n$ in $x_0$.
When $s > 2$, we are not even able to detect from the definition whether a map in $C^s(x_0)$ has a second variation at $x_0$.
However, $C^s(x_0)$ is still a useful notion for detecting when a map \ita{fails} to have a certain level of Gateaux regularity, which is what is relevant for the breakdown of regularity statements in Corollary \ref{cor:main}.
This arises through the generalization of Taylor's theorem with remainder for Banach space valued functions.
\begin{thm}[Taylor's theorem with remainder; \cites{Graves27}, Theorem 5]\label{thm:Taylor_with_remainder}
    Let $X$ and $Y$ be Banach spaces.
    Let $U\subset X$ be a convex neighborhood of $u\in X$.
    Let $G:U\to Y$ be $n$-times Gateaux differentiable on $U$, and let $x_0\in X$ be such that $d^nG(x_0 + s\ve h;h)$ is Riemann integrable (defined in \cites{Graves27}) over $s\in(0,1)$ whenever $\ve>0$ is sufficiently small.
    Then for all $h\in X$ with $\|h\|_X=1$ and $\ve>0$ small,
    \[
        G(x_0+ \ve h) = G(x_0) + \sum_{j=1}^n \frac{\ve^j}{j!}d^jG(x_0;h) + \ve^{n+1}R_{n+1}(x_0,h,\ve)
    \]
    where
    \[
        R_{n+1}(x_0,h,\ve) = \frac{1}{n!}\int_0^1 (1-s)^n d^{n+1}G(x_0+s\ve h;h)~ds.    
    \]
\end{thm}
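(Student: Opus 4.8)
The plan is to reduce the statement to the classical one-variable Taylor formula with integral remainder, applied to a Banach-space-valued function of a real parameter, and then to translate the result back into the language of Gateaux derivatives. First I would fix $h\in X$ with $\|h\|_X = 1$ and $\ve>0$ small enough that the segment $\{x_0 + s\ve h : 0\leq s\leq 1\}$ lies in $U$, and define $\psi:[0,1]\to Y$ by $\psi(s) = G(x_0 + s\ve h)$. Since $\psi(s+r) = G(x_0 + s\ve h + r\ve h)$, the definition of the higher Gateaux derivatives together with their homogeneity in the direction (the property $d^jG(x;\lambda v) = \lambda^j\,d^jG(x;v)$ recalled above) gives $\psi^{(j)}(s) = \ve^j\,d^jG(x_0 + s\ve h; h)$ for each relevant $j$; in particular the hypotheses ensure that $\psi$ is $(n+1)$-times differentiable on $[0,1]$ with $\psi^{(n+1)}$ Riemann integrable in the sense of \cites{Graves27}.

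Next I would prove, by induction on $n$, the one-variable identity
\[
    \psi(1) = \sum_{j=0}^{n} \frac{\psi^{(j)}(0)}{j!} + \frac{1}{n!}\int_0^1 (1-s)^n \psi^{(n+1)}(s)~ds.
\]
The base case $n=0$ is the fundamental theorem of calculus for $Y$-valued functions with Riemann integrable derivative. For the inductive step one integrates by parts in the remainder term, pairing the scalar factor $(1-s)^{n-1}/(n-1)!$ (whose antiderivative is $-(1-s)^n/n!$) against $\psi^{(n)}$: the boundary term at $s=1$ vanishes because of the factor $(1-s)^n$, while the one at $s=0$ contributes $\psi^{(n)}(0)/n!$, advancing the expansion from order $n-1$ to order $n$. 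Both the fundamental theorem of calculus and integration by parts against a smooth scalar factor are available in Graves's development of calculus for Banach-space-valued functions, so no additional analytic input is required.

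Finally I would substitute back $\psi(1) = G(x_0 + \ve h)$, $\psi^{(j)}(0) = \ve^j\,d^jG(x_0;h)$, and $\psi^{(n+1)}(s) = \ve^{n+1}\,d^{n+1}G(x_0 + s\ve h; h)$, and pull the factor $\ve^{n+1}$ out of the integral. This produces exactly the claimed expansion with
\[
    R_{n+1}(x_0,h,\ve) = \frac{1}{n!}\int_0^1 (1-s)^n\, d^{n+1}G(x_0 + s\ve h;h)~ds,
\]
and the bound on the remainder is uniform over $\|h\|_X = 1$ since nothing in the argument used $h$ beyond its unit norm.

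The only point requiring genuine care, and hence the main (if modest) obstacle, is the verification that the one-dimensional calculus invoked above — the fundamental theorem of calculus, integration by parts against a smooth scalar factor, and the identification $\psi^{(j)}(s) = \ve^j\,d^jG(x_0 + s\ve h;h)$ — is legitimate for $Y$-valued functions of a real variable whose top-order derivative is merely Riemann integrable. This is precisely the content of the Banach-space differential and integral calculus set up in \cites{Graves27}, which we use throughout.
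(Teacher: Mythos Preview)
Your approach is correct and is exactly the standard reduction to the one-variable Taylor formula with integral remainder; there is nothing to fault in the outline. Note, however, that the paper does not give its own proof of this theorem: it is quoted verbatim from \cites{Graves27}, Theorem 5, and used as a black box in the proof of Lemma \ref{lem:Cs_necessary}. So there is no ``paper's proof'' to compare against, but your sketch is precisely how one would prove it (and, up to presentation, is what Graves does).

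One small observation that is worth recording: the hypotheses as stated in the paper contain a harmless typo. The theorem assumes $G$ is $n$-times Gateaux differentiable and that $d^nG(x_0+s\ve h;h)$ is Riemann integrable, but the remainder formula involves $d^{n+1}G$. Your sketch implicitly (and correctly) reads the hypotheses as $(n+1)$-times Gateaux differentiable with $d^{n+1}G(x_0+s\ve h;h)$ Riemann integrable in $s$; without this the integral defining $R_{n+1}$ would not even make sense. This is clearly the intended reading, and it is also how the theorem is actually applied in the proof of Lemma \ref{lem:Cs_necessary} (with $n$ there playing the role of $n+1$ here).
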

We now arrive at the main statement of interest.
It states that for $n < s < n+1$, membership in $C^s(x_0)$ is necessary for a map $G$ to be $n$ times Gateaux differentiable with $d^nG(x;h)$ H\"older continuous of order $s-n$.
This gives us a way of detecting whether $G$ admits $s$ derivatives in this latter sense.
\begin{lem}
    Let $X$ and $Y$ be Banach spaces.
    Let $U\subset X$ be a convex neighborhood of $x_0\in X$.
    Let $G:U\to Y$ be a map, and suppose $G\notin C^s(x_0)$ with $n < s < n+1$.
    Then $d^nG(x;h)$, if it exists for $x\in U$, cannot be a H\"older continuous function of $x$ of order $s-n$ with H\"older seminorm uniformly bounded in $h$.
\end{lem}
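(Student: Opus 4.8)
The plan is to argue by contraposition: assuming $G$ has an $n$-th Gateaux derivative $d^nG(x;h)$ on $U$ that is H\"older continuous of order $s-n$ in $x$ with seminorm bounded uniformly in $h$, I will show $G\in C^s(x_0)$. The natural tool is Theorem \ref{thm:Taylor_with_remainder} (Taylor with remainder), which under these hypotheses applies with $n$ replaced by $n-1$: for $\|h\|_X=1$ and $\ve>0$ small,
\[
    G(x_0+\ve h) = G(x_0) + \sum_{j=1}^{n-1}\frac{\ve^j}{j!}d^jG(x_0;h) + \ve^n R_n(x_0,h,\ve),
\]
with $R_n(x_0,h,\ve) = \frac{1}{(n-1)!}\int_0^1 (1-\tau)^{n-1}d^nG(x_0+\tau\ve h;h)~d\tau$. (One should first check that the $d^jG(x_0+s\ve h;h)$ are Riemann-integrable in $s$, which follows from continuity of $d^nG$ in $x$ and the fact that lower-order Gateaux derivatives exist and are continuous along the segment; this is a routine point I would dispatch quickly.)

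The key step is then to extract from $R_n$ the $n$-th Taylor coefficient. Write
\[
    R_n(x_0,h,\ve) - \frac{1}{n!}d^nG(x_0;h)
        = \frac{1}{(n-1)!}\int_0^1 (1-\tau)^{n-1}\big[d^nG(x_0+\tau\ve h;h) - d^nG(x_0;h)\big]~d\tau,
\]
using $\int_0^1(1-\tau)^{n-1}d\tau = \frac 1n$. By the assumed H\"older continuity of order $s-n$ with uniform seminorm $M$, the bracket is bounded in $Y$-norm by $M\|\tau\ve h\|_X^{s-n} = M\tau^{s-n}\ve^{s-n}$, so the integral is $\mathcal{O}(\ve^{s-n})$ uniformly in $h$. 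Multiplying by $\ve^n$, this gives
\[
    \Big\|G(x_0+\ve h) - G(x_0) - \sum_{j=1}^{n-1}\frac{\ve^j}{j!}d^jG(x_0;h) - \frac{\ve^n}{n!}d^nG(x_0;h)\Big\|_Y \lesssim \ve^s,
\]
with implicit constant independent of $h$. Setting $a_j(x_0;h) = \frac{1}{j!}d^jG(x_0;h)$ for $j=1,\dots,n$ exhibits exactly the decomposition required by Definition \ref{dfn:Holder_space}, so $G\in C^s(x_0)$, contradicting the hypothesis.

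The main obstacle I anticipate is purely a bookkeeping one: verifying the integrability hypothesis of Theorem \ref{thm:Taylor_with_remainder} in the Banach-space-valued sense of \cites{Graves27}, and confirming that H\"older continuity of $d^nG$ in $x$ (rather than mere existence) is genuinely what is needed to make the remainder estimate uniform in the direction $h$ — the uniformity hinges on the H\"older seminorm being bounded independently of $h$, which is precisely why that hypothesis is stated. Once those points are in place, the estimate above is immediate, and no delicate analysis is involved; the content of the lemma is really that $C^s(x_0)$ membership is the weakest conclusion one can draw, so its failure obstructs the stronger Gateaux-regularity statement.
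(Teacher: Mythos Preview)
Your proposal is correct and follows essentially the same argument as the paper: both apply Taylor's theorem with integral remainder (Theorem~\ref{thm:Taylor_with_remainder}) at order $n-1$, then subtract $\tfrac{1}{n!}d^nG(x_0;h)$ from the remainder $R_n$ and use the assumed H\"older continuity of $d^nG(\cdot;h)$ to bound the resulting integral by $\mathcal{O}(\ve^{s-n})$ uniformly in $h$, yielding $G\in C^s(x_0)$. The bookkeeping concern you flag about Riemann integrability is exactly the point the paper dispatches by noting that H\"older continuity of $d^nG$ in $x$ places $G$ in the scope of Theorem~\ref{thm:Taylor_with_remainder}.
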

\begin{proof}
    Suppose for contradiction that $d^nG(x;h)$ exists on $U$ and is H\"older continuous of order $s-n$ in $x$, with H\"older seminorm uniformly bounded in $h$.
    Then all lower order Gateaux derivatives must also exist.
    This implies that $G$ satisfies the conditions of Theorem \ref{thm:Taylor_with_remainder}, and hence admits the expansion
    \[
        G(x_0+\ve h) = G(x_0) + \sum_{j=1}^{n-1} \frac{\ve^j}{j!}d^jG(x_0;h) + \ve^nR_n(x_0,h,\ve)    
    \]
    as $\ve\to 0$, where $R_n$ is given as in Theorem \ref{thm:Taylor_with_remainder}.
    By the H\"older continuity assumption, we have
    \begin{align*}
        \|R_n(x_0,h,\ve) &- \frac{1}{n!}d^nG(x_0;h)\|_Y\\
            &= \left\|\frac{1}{(n-1)!}\int_0^1 (1-r)^{n-1}[d^nG(x_0 + r\ve h;h) - d^nG(x_0;h)]~dr\right\|_Y\\
            &\leq \frac{1}{(n-1)!}\int_0^1 (1-r)^{n-1}\|d^nG(x_0+r\ve h;h) - d^nG(x_0;h)\|_Y~dr\\
            &\lesssim \ve^{s-n}\int_0^1 (1-r)^{n-1}r^s~dr \leq \ve^{s-n}.
    \end{align*}
    Therefore
    \begin{align*}
        G(x_0+\ve h) 
            &= G(x_0) + \sum_{j=1}^{n-1} \frac{\ve^j}{j!}d^jG(x_0;h) + \ve^nR_n(x_0,h,\ve)\\
            &= G(x_0) + \sum_{j=1}^n \frac{\ve^j}{j!}d^jG(x_0;h) + \ve^n[R_n(x_0,h,\ve)-\frac{1}{n!}d^nG(x_0;h)]\\
            &= G(x_0) + \sum_{j=1}^n \frac{\ve^j}{j!}d^jG(x_0;h) + \mcal{O}_Y(\ve^s).
    \end{align*}
    But then $G\in C^s(x_0)$, contradiction.
\end{proof}

Lastly, we need a way of checking that a given $G$ does not belong to the class $C^s(x_0)$.
\begin{lem}\label{lem:notin_Cs}
    Let $n$ be a positive integer, and let $n < s < s+\delta < n+1$.
    Assume $G\in C^s(x_0)$ with Peano derivatives $\{a_j(x_0;h)\}_{j=1}^n$, so that
    \[
        \|G(x_0 + \ve h) - G(x_0) - \sum_{j=1}^n \ve^j a_j(x_0; h)\|_Y \lesssim \ve^s.
    \]
    Suppose also that
    \[
        \|G(x_0 + \ve h) - G(x_0) - \sum_{j=1}^n \ve^j a_j(x_0; h)\|_Y \not\lesssim \ve^{s+\delta}.
    \]
    Then $G\notin C^{s+\delta}(x_0)$.
\end{lem}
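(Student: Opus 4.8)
The plan is to exploit that, since $n < s < s+\delta < n+1$, the spaces $C^s(x_0)$ and $C^{s+\delta}(x_0)$ are defined by Taylor-type expansions of the \emph{same} length $n$. Consequently, membership in the smaller space $C^{s+\delta}(x_0)$ would be forced to use the same coefficients $\{a_j(x_0;h)\}_{j=1}^n$ that already appear in the $C^s(x_0)$ expansion, and this contradicts the assumed sharpness of the remainder.

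I would argue by contradiction. Suppose $G \in C^{s+\delta}(x_0)$. Then for each $h\in X$ with $\|h\|_X = 1$ there are coefficients $\{b_j(x_0;h)\}_{j=1}^n\subset Y$ with
\[
\|G(x_0 + \ve h) - G(x_0) - \sum_{j=1}^n \ve^j b_j(x_0; h)\|_Y \lesssim \ve^{s+\delta}
\]
for all sufficiently small $\ve>0$, with implicit constant independent of $h$. Subtracting this from the hypothesized $C^s(x_0)$ expansion with Peano derivatives $\{a_j(x_0;h)\}$ and using the triangle inequality gives, for all small $\ve>0$,
\[
\Big\| \sum_{j=1}^n \ve^j \big(a_j(x_0;h) - b_j(x_0;h)\big) \Big\|_Y \lesssim \ve^s + \ve^{s+\delta} \lesssim \ve^s .
\]

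The next step is the standard uniqueness induction for such expansions. Dividing by $\ve$ and letting $\ve\to 0^+$ forces $a_1(x_0;h) = b_1(x_0;h)$, since $s-1 > n-1 \geq 0$ makes $\ve^{s-1}\to 0$. Having cancelled the $j=1$ term, one divides again by $\ve$ and repeats: at stage $k\le n$ one has $\|\sum_{j\ge k}\ve^{j-k}(a_j - b_j)\|_Y \lesssim \ve^{s-k}$, and the condition $s>n$ ensures $s-k > n-k \ge 0$, so the limit as $\ve\to 0^+$ yields $a_k(x_0;h) = b_k(x_0;h)$. (It is precisely the assumption $s>n$ — rather than $s\ge n$ — that makes every limit in this induction vanish, so that no additional coefficient needs to be disentangled.) Iterating through $k=1,\dots,n$ gives $b_j(x_0;h) = a_j(x_0;h)$ for all $j$ and all such $h$.

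But then the assumed $C^{s+\delta}(x_0)$ estimate reads
\[
\|G(x_0 + \ve h) - G(x_0) - \sum_{j=1}^n \ve^j a_j(x_0; h)\|_Y \lesssim \ve^{s+\delta},
\]
which contradicts the standing hypothesis that this remainder is $\not\lesssim \ve^{s+\delta}$. Hence $G\notin C^{s+\delta}(x_0)$. I do not anticipate a genuine obstacle here: the only points requiring care are the bookkeeping in the uniqueness induction and the fact that all implicit constants stay uniform in $h$ throughout, both of which are routine.
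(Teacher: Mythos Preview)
Your proof is correct and follows essentially the same route as the paper: argue by contradiction, obtain two degree-$n$ expansions, show the coefficients coincide, and derive a contradiction. The only difference is cosmetic---the paper invokes a cited uniqueness result (Theorem~\ref{thm:Taylor_converse}) for the identification $b_j = a_j$, whereas you supply the standard divide-by-$\ve$-and-let-$\ve\to 0$ induction directly.
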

The proof is based on the following uniqueness statement for the Peano derivatives:
\begin{thm}[\cites{Graves27}, Theorem 6]\label{thm:Taylor_converse}
    Let $X$ and $Y$ be Banach spaces.
    Let $U\subset X$ be a convex neighborhood of $x_0\in X$.
    Let $G:U\to Y$ be a map.
    Then for each positive integer $n$, there exists at most one expansion of the form
    \[
        G(x_0+h) = G(x_0) + \sum_{j=1}^n a_j(x_0;h) + R_{n+1}(x_0,h)
    \]
    satisfying $a_j(x_0;sh) = s^ja_j(x_0;h)$ and $R_{n+1}(x_0,h) = o(\|h\|_Y^n)$ as $h\to 0$.
\end{thm}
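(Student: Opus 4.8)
The plan is to prove uniqueness by subtracting two putative expansions and peeling off the homogeneous pieces one degree at a time. Suppose $G$ admits two expansions
\[
    G(x_0+h) = G(x_0) + \sum_{j=1}^n a_j(x_0;h) + R_{n+1}(x_0,h) = G(x_0) + \sum_{j=1}^n b_j(x_0;h) + S_{n+1}(x_0,h),
\]
each satisfying $a_j(x_0;sh) = s^j a_j(x_0;h)$, $b_j(x_0;sh) = s^j b_j(x_0;h)$, and $R_{n+1}(x_0,h) = o(\|h\|_X^n)$, $S_{n+1}(x_0,h) = o(\|h\|_X^n)$ as $h\to 0$. Setting $c_j(h) := a_j(x_0;h) - b_j(x_0;h)$, each $c_j$ is $j$-homogeneous, and subtracting the two expansions yields
\[
    \sum_{j=1}^n c_j(h) = S_{n+1}(x_0,h) - R_{n+1}(x_0,h) = o(\|h\|_X^n) \qquad (h\to 0).
\]

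First I would freeze a direction: fix $h\in X$ with $\|h\|_X = 1$ (the case $h=0$ being trivial since $c_j(0)=0$ by homogeneity), and substitute $th$ for $t>0$. Using $c_j(th) = t^j c_j(h)$ and $\|th\|_X = t$, and noting that the estimate $\|R_{n+1}(x_0,h)\|_Y \le \varepsilon\|h\|_X^n$ valid for $\|h\|_X<\delta$ specializes, with $h$ of unit norm, to $\|R_{n+1}(x_0,th)\|_Y \le \varepsilon t^n$ for $0<t<\delta$ (and likewise for $S_{n+1}$), the displayed estimate becomes the scalar-parameter statement
\[
    \sum_{j=1}^n t^j c_j(h) = o(t^n) \qquad (t\to 0^+),
\]
that is, a $Y$-valued polynomial in $t$ with no constant term that is $o(t^n)$.

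Next I would extract the coefficients successively. Having shown $c_1(h)=\cdots=c_{k-1}(h)=0$, divide the identity $\sum_{j=k}^n t^j c_j(h) = o(t^n)$ by $t^k$ to obtain $c_k(h) + t c_{k+1}(h) + \cdots + t^{n-k}c_n(h) = o(t^{n-k})$; letting $t\to 0^+$ (and using $n-k\ge 0$) forces $c_k(h)=0$. Iterating from $k=1$ to $k=n$ gives $c_j(h)=0$ for every $j$. Since $h$ was an arbitrary unit vector, homogeneity gives $c_j\equiv 0$ on $X$, hence $a_j(x_0;\cdot)=b_j(x_0;\cdot)$ for all $j$, and consequently $R_{n+1}(x_0,\cdot)=S_{n+1}(x_0,\cdot)$, which is the asserted uniqueness.

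There is no substantial obstacle here; the argument is entirely elementary. The only point requiring any care is the reduction from the multivariable little-$o$ condition in $X$ to the one-variable condition $o(t^n)$ along a fixed ray, which is immediate from the definition of little-$o$ as indicated above. I would also emphasize that only $j$-homogeneity under positive scalars is invoked, so the hypotheses as stated suffice and no further structure on the coefficient maps $a_j$, $b_j$ is needed.
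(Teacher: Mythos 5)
Your argument is correct: restricting to rays $t\mapsto x_0+th$ with $\|h\|_X=1$, using $j$-homogeneity to turn the difference of the two expansions into a $Y$-valued polynomial in $t$ that is $o(t^n)$, and peeling off coefficients by successive division by powers of $t$ is a complete and rigorous proof of the uniqueness statement (and you are right that only positive-scalar homogeneity and the one-variable little-$o$ reduction are needed; note the exponent should be read as $o(\|h\|_X^n)$, the subscript $Y$ in the statement being a typo). The paper itself offers no proof of this theorem --- it is quoted from Graves --- so there is nothing to compare against; your elementary coefficient-identification argument is the standard one and is exactly the kind of proof the cited source gives.
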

\begin{proof}[Proof of Lemma \ref{lem:notin_Cs}]
    Suppose to the contrary that $G\in C^{s+\delta}(x_0)$.
    Then there are coefficients $\{b_j(x_0;h)\}_{j=1}^n$ such that
    \[
        \|G(x_0 + \ve h) - G(x_0) - \sum_{j=1}^n \ve^j b_j(x_0; h)\|_Y \lesssim \ve^{s+\delta}.
    \]
    Then we have two polynomial expansions for $G(x_0+h)$ around $x_0$ of degree $n$ with $o(\|h\|_Y^n)$ remainder as $h\to 0$.
    By Theorem \ref{thm:Taylor_converse}, it follows that $b_j = a_j$.
    But this contradicts the assumption that the the error in the expansion $G(x_0+\ve h) \sim G(x_0) + \sum_{j=1}^n \ve^ja_j(x_0;h)$ is not $\mcal{O}(\ve^{s+\delta})$.
\end{proof}
The utility of Lemma \ref{lem:notin_Cs} is that so long as we can verify one asymptotically valid polynomial approximation of $G(x_0+\ve h)$, the same polynomial approximation can be used to check the membership of $G$ in $C^s(x_0)$, as long as there is no need to add a higher-order derivative term to the expansion.

\section*{Acknowledgments} The author thanks his advisors Rowan Killip and Monica Vi\c san for many helpful discussions and guidance.
This work was supported by NSF grants 1600942 (principal investigator: Rowan Killip) and 1500707 (principal investigator: Monica Vi\c{s}an).

%fix later
%\nocite{*}
\bibliography{main}

\end{document}